\newtheorem{theorem}{Theorem}
\newtheorem*{theorem*}{Theorem}
\newtheorem*{theoremA}{Theorem A}
\newtheorem*{theoremB}{Theorem B}
\newtheorem{definition}{Definition}
\newtheorem{lemma}{Lemma}
\newtheorem{remark}{Remark}
\newtheorem{corollary}{Corollary}
\newtheorem{example}{Example}
\begin{document}

\author{Douglas Finamore}
\address{Departament of Mathematics - ICMC, University of S\~ao Paulo}
\email{douglas.finamore@usp.br}
\thanks{
    This work was funded by Brazilian Coordena\c c\~ao de Aperfei\c coamento de Pessoal de N\'ivel Superior (CAPES), grant PROEX-11377206/D.
    I would like to express my deep appreciation to Eugenia Loiudice and Carlos Maquera for their contributions and insightful discussions throughout the course of this project. 
    Their input has been instrumental in its development.
}

\subjclass[2020]{Primary 37C85; Secondary 37C86, 53C15, 53D10.}


\dedicatory{}

\begin{abstract}

We show that every quasiconformal contact foliation supports an invariant metric and characterise these foliations by the dynamical property of $C^1$-equicontinuity. We prove that a generalisation of the Weinstein conjecture holds for quasiconformal contact foliations, and provide a lower bound to the number of closed leaves. In particular, we show that the Weinstein conjecture holds for quasiconformal Reeb fields.

\end{abstract}

\title{Quasiconformal contact foliations}
\date{January 2023}

\maketitle

\section{Introduction}

\par A $q$-contact structure is a generalisation of a number of contact-like structures, including contact manifolds. 
On a $(2n+q)$-dimensional manifold $M$, such structure induces a splitting $TM = \xi \oplus \mathcal{R}$, where $\xi$ is a totally non-integrable bundle and $\mathcal{R}$ integrates to a \emph{contact foliation} $\mathcal{F}$, which is the orbit foliation of a Lie group action of $\mathbb{R}^q$ on $M$, called a \emph{contact action}. 
In this respect, $\mathcal{F}$ is a high dimensional analogue to the flow of the Reeb vector field of a coorientable contact structure. 
For instance, Weyl chamber actions are of contact nature \cite[Theorem 1.0.2]{almeida_contact_2018}.
\par In the author's earlier work \cite{finamore_contact_2022}, contact foliations were investigated in further detail, with particular interest in the existence of closed leaves when the ambient manifold is closed.
We called such problem the \emph{strong generalised Weinstein conjecture}. 
The conjecture holds in several cases, particularly when $\mathbb{R}^q$ acts on $M$ via isometries of a metric $g$. In this case, we say the contact foliation is \emph{isometric.}
\par It turns out that the existence of a bundle-like metric for $\mathcal{F}$ is sufficient to guarantee that $\mathcal{F}$ is isometric. 
In other worlds, every Riemannian contact foliation is isometric, so the two classes coincide \cite[Section 3.2]{finamore_contact_2022}. 
In this paper, we strengthen this equivalence by showing that, up to a choice of metric, a quasiconformal (in particular, conformal) contact foliation can be assumed isometric. 
More accurately, for contact foliations all these different notions can be characterised by the property of being equicontinuous with respect to the $C^1$-topology, which we call \textit{$C^1$-equicontinuity}.
\begin{theoremA}
    Let $\mathcal{F}$ be a contact foliation on a closed manifold $M$. The following are equivalent.
    \begin{itemize}
        \item[(i)] $\mathcal{F}$ is $C^1$-equicontinuous;
        \item[(ii)] the $C^1$-enveloping group $\mathrm{E}^1_{F}$ is a torus;
        \item[(iii)] $\mathcal{F}$ admits an invariant metric;
        \item [(iv)] $\mathcal{F}$ admits a bundle-like metric;
        \item[(v)] $\mathcal{F}$ is quasiconformal;
        \item[(vi)] $\mathcal{F}$ admits an invariant conformal structure.
    \end{itemize}
\end{theoremA}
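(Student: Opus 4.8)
The plan is to prove the cyclic chain
\[
\text{(i)}\Rightarrow\text{(ii)}\Rightarrow\text{(iii)}\Rightarrow\text{(iv)}\Rightarrow\text{(vi)}\Rightarrow\text{(v)}\Rightarrow\text{(i)},
\]
so that all but one of the arrows are formal or already available. For (i)$\Rightarrow$(ii): $C^1$-equicontinuity gives uniform bounds on the derivatives and inverse derivatives of the time maps $\Phi^{t}$ of the contact action, so by an Arzelà–Ascoli argument the group they generate is relatively compact in $\mathrm{Diff}^1(M)$; thus $\mathrm{E}^1_{F}$ is compact, and being the closure of the continuous image of the connected abelian group $\mathbb{R}^q$ it is compact, connected and abelian. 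By Bochner–Montgomery a compact group of $C^1$-diffeomorphisms is a Lie group, so $\mathrm{E}^1_{F}$ is a torus. For (ii)$\Rightarrow$(iii): average any metric on $M$ over the Haar measure of the torus $\mathrm{E}^1_{F}$; since the contact action factors through $\mathrm{E}^1_{F}$, the result is an invariant metric. For (iii)$\Rightarrow$(iv): an invariant metric makes the contact action isometric, the transverse metric it induces on $\nu\mathcal{F}$ is holonomy-invariant, and the metric is bundle-like. For (iv)$\Rightarrow$(vi): by \cite[Section 3.2]{finamore_contact_2022} a Riemannian contact foliation is isometric, hence admits an invariant metric, whose conformal class witnesses (vi). For (vi)$\Rightarrow$(v): a holonomy-invariant conformal structure makes the holonomy $1$-quasiconformal, so $\mathcal{F}$ is quasiconformal.

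The one substantial implication is (v)$\Rightarrow$(i). Fix a reference metric on $\nu\mathcal{F}$; quasiconformality means the linearised holonomy cocycle $A$ has uniformly bounded distortion, $\|A(h)\|\,\|A(h)^{-1}\|\le K$. Under the identification $\nu\mathcal{F}\cong\xi$ coming from the $q$-contact structure, $\xi$ carries a transverse conformal symplectic form $\omega$ (the transverse part of ``$d\alpha$''), which the holonomy multiplies by a positive conformal factor $\lambda(h)$; with respect to an $\omega$-compatible reference metric the singular values of $A(h)$ then occur in pairs $\{\sigma,\ \lambda(h)/\sigma\}$. Hence the distortion bound confines every singular value of $A(h)$ to the interval $[\lambda(h)^{1/2}K^{-1/2},\,\lambda(h)^{1/2}K^{1/2}]$, so the whole cocycle $A$ is bounded in $\mathrm{GL}(\nu\mathcal{F})$ as soon as $\lambda$ is bounded above and below. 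In that case the holonomy pseudogroup of $\mathcal{F}$ is relatively compact in the $C^1$-topology, i.e.\ $\mathcal{F}$ is $C^1$-equicontinuous; alternatively, averaging a transverse metric over that compact $C^1$-closure yields an invariant transverse metric, recovering (iii) and (iv) directly.

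The heart of the matter is therefore to bound the conformal factor $\lambda(h)=|\det A(h)|^{1/n}$, which measures the distortion of the transverse symplectic volume. When $\mathcal{F}$ is the flow of a Reeb field this is automatic, since the Reeb flow preserves $d\alpha$ and so $\lambda\equiv1$. In general one must use compactness of $M$: were $\lambda$ unbounded along some sequence of holonomy maps, the singular-value pairing above would force all transverse singular values of those maps to blow up simultaneously, so the sequence would expand $\nu\mathcal{F}$ uniformly, which is impossible for the holonomy of a foliation of a closed manifold — a Poincaré-recurrence and volume-counting argument on returns to a fixed relatively compact transversal forbids a small transversal disk from being mapped, essentially injectively, to subsets of unboundedly large volume inside a transversal of finite total volume. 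This boundedness step is the main obstacle: the distortion hypothesis by itself controls only the ``shape'' of the linearised holonomy and not its ``size'', and it is precisely here that the contact geometry of $\mathcal{F}$, through its transverse conformal symplectic structure, together with the compactness of $M$, enters in an essential way; the remainder of the equivalence is soft.
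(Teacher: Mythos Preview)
Your cycle is organized differently from the paper's, but the soft implications are fine. The substantive divergence is in handling quasiconformality. The paper proves (v)$\Rightarrow$(vi) by invoking Tukia's theorem on quasiconformal groups, and then (vi)$\Rightarrow$(iii) by observing that $\mathcal{F}$ is transversely flat conformal and carries a good holonomy-invariant transverse measure (namely $\|d\lambda_1^n\|$), so that Asuke's theorem applies and $\mathcal{F}$ is Riemannian. Your direct route (v)$\Rightarrow$(i) via the singular values of the linearised holonomy is genuinely different, more elementary, and avoids both external results by exploiting the transverse symplectic geometry specific to contact foliations.

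It also works more cleanly than you realise: your conformal factor $\lambda(h)$ is identically $1$, not only for $q=1$ but always. The contact action is generated by the Reeb fields $R_j$, and the $q$-contact axioms give $\iota_{R_j}d\lambda_i=0$ and $\lambda_i(R_j)=\delta_{ij}$, hence $\mathcal{L}_{R_j}\lambda_i=0$ and $\mathcal{L}_{R_j}d\lambda_i=0$ for every $i,j$. Each $d\lambda_i$ is thus a genuinely invariant (not merely conformally invariant) transverse symplectic form. Your singular-value pairing then reads $\{\sigma,1/\sigma\}$, the distortion bound forces $\sigma_{\max}^2\le K$, and the cocycle lands in $[K^{-1/2},K^{1/2}]$ with nothing further to prove. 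The Poincar\'e-recurrence and volume-counting sketch you offer for bounding $\lambda$ is therefore unnecessary --- which is fortunate, since as stated it is too vague to stand on its own (holonomy maps are only locally defined, a transversal need not have finite total volume for an arbitrary reference metric, and ``essentially injectively'' conceals the real work).
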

\par In the second half of this paper, using the existence of invariant metrics, we construct a Morse-Bott function which can be used to count the closed leaves of a quasiconformal contact foliation.
This will allow us to conclude the validity of the strong Weinstein conjecture for quasiconformal contact foliations.
In particular, this shows that every quasiconformal Reeb field on a closed contact manifold satisfies the Weinstein conjecture, a result not previously known, to the best of the author's knowledge.
\begin{theoremB}
    A quasiconformal contact foliation of codimension $2n$ on a closed manifold $M$ has at least $n+1$ closed leaves.
\end{theoremB}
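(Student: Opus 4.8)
The plan is to use Theorem A to convert the dynamics into a torus action, to build from the contact forms a moment-type map whose critical set is exactly the union of the closed leaves, and to count those leaves by Morse--Bott theory. First I would note that, as $\mathcal F$ is quasiconformal, Theorem A provides an invariant metric $g$, so the contact action of $\mathbb R^q$ on $M$ is by $g$-isometries; since $M$ is closed, $\operatorname{Isom}(M,g)$ is a compact Lie group, and the closure $G:=\overline{\mathbb R^q}\subseteq\operatorname{Isom}(M,g)$ is a compact connected abelian group, hence a torus $\mathbb T^{k}$ with $q\le k$. Each defining form $\alpha_i$ is then $G$-invariant. Writing $\mathfrak g=\operatorname{Lie}(G)$ and $X^{*}$ for the field generated by $X\in\mathfrak g$, I would choose $v_1,\dots,v_q\in\mathfrak g$ with the $v_i^{*}$ equal to the infinitesimal generators of the contact action, pick scalars $\lambda_i$ so that $\alpha:=\sum_i\lambda_i\alpha_i$ has $d\alpha|_{\xi}$ nondegenerate (possible since $\xi$ is totally non-integrable), and set $v=\sum_i\lambda_i v_i$, so that $\ker d\alpha=\mathcal R$ and $\alpha(v^{*})\equiv 1$. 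The leaves of $\mathcal F$ are the $\mathbb R^q$-orbits, so a leaf is closed exactly when the $G$-orbit through it has dimension $q$.

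Next I would introduce the moment map $\Phi\colon M\to\mathfrak g^{*}$, $\langle\Phi(x),X\rangle:=\alpha_x(X^{*}_x)$. It is $G$-invariant (as $G$ is abelian and preserves $\alpha$) and takes values in the affine slice $\{\ell:\langle\ell,v_i\rangle=\lambda_i\text{ for all }i\}$; using $L_{X^{*}}\alpha=0$ and Cartan's formula one gets $d\langle\Phi,X\rangle=-\iota_{X^{*}}d\alpha$, so $x$ is a critical point of every component of $\Phi$ iff $T_x(G\cdot x)\subseteq\ker d\alpha_x=\mathcal R_x$, i.e.\ iff the orbit $G\cdot x$ is tangent to $\mathcal F$, i.e.\ iff $G\cdot x$ is a closed leaf. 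A short orbit-type argument shows that for generic $X\in\mathfrak g$ the single function $f:=\langle\Phi,X\rangle$ already has $\operatorname{Crit} f$ equal to the union of the closed leaves: $x\in\operatorname{Crit} f$ iff $X\in\mathfrak g_x+\operatorname{span}\{v_i\}$, which generically forces $\dim(G\cdot x)\le q$, while conversely every closed leaf has $\mathfrak g_x\oplus\operatorname{span}\{v_i\}=\mathfrak g$ since the $v_i^{*}$ are pointwise independent.

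Finally I would run Morse theory on $f$. If some critical submanifold has dimension $>q$ it is foliated by a positive-dimensional family of closed leaves, so $M$ has infinitely many of them and there is nothing to prove; otherwise every critical submanifold is a single closed leaf $L\cong\mathbb T^{q}$. Its stabiliser then fixes only $L$, hence acts on the rank-$2n$ normal bundle with nonzero weights, so $\nu(L)=\bigoplus_j\ell_j$ is a sum of complex line bundles with weights $\beta_j$, and in the contact normal form $f$ agrees to second order with $f(L)+\tfrac12\sum_j\langle\beta_j,X'\rangle\,|z_j|^{2}$, where $X=X'+\sum_i c_i v_i$ with $X'$ in the stabiliser algebra; so for generic $X$ the function $f$ is Morse--Bott, every critical submanifold is a union of closed leaves, and the negative normal bundles are complex, whence all Morse indices are even and lie in $\{0,2,\dots,2n\}$. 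The Morse--Bott inequalities then read $(1+t)^{q}\sum_{\alpha}t^{\lambda_\alpha}=P_t(M)+(1+t)\,Q(t)$ with $Q$ having non-negative coefficients, and combining this with the connectedness of the level sets of $f$ --- a contact analogue of the Atiyah--Guillemin--Sternberg theorem, available here because $f$ has even index and coindex --- forces $\{\lambda_\alpha\}$ to meet every even integer in $[0,2n]$. This yields at least $n+1$ critical submanifolds and hence at least $n+1$ closed leaves; specialising to $q=1$ gives the Weinstein conjecture for quasiconformal Reeb fields.

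The routine ingredients are Theorem A, the moment-map identities, and the positive-dimensional case. The hard part will be twofold: verifying that $f$ is genuinely Morse--Bott for generic $X$ (the normal-form computation at the closed leaves, together with the exclusion of degenerate critical behaviour elsewhere), and establishing the connectedness/convexity properties of $\Phi$ that make all even indices up to $2n$ occur --- it is this last input that produces the sharp constant $n+1$, and so it is the real obstacle.
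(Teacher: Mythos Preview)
Your setup closely parallels the paper's: invoke Theorem~A to obtain an invariant metric and a torus closure $E=\overline{F(\mathbb R^q)}$, choose a generic element $X$ (the paper's $Z$) of its Lie algebra, and study the function $f=\alpha(X^\ast)$ (the paper's $S=\iota_Z\lambda_i$). The identification $\operatorname{Crit}(f)=\{\text{closed leaves}\}$ and the Morse--Bott property are established in the paper via an explicit Hessian computation using the skew tensors $f_i:X\mapsto\nabla_XR_i$ and Kostant's formula $\nabla_Xf_i=\mathrm R(X,R_i)$; your normal-form sketch aims at the same conclusion.

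The gap is in the counting step. You work with the ordinary de Rham Morse--Bott inequality and then assert that connectedness of the level sets of $f$ ``forces $\{\lambda_\alpha\}$ to meet every even integer in $[0,2n]$''. That implication is not a theorem: Atiyah-type connectedness yields uniqueness of the local minimum and maximum, not surjectivity of the index map onto $\{0,2,\dots,2n\}$. More fundamentally, $P_t(M)$ can be almost trivial in even degrees---already for $M=S^{2n+1}$ one has $P_t(M)=1+t^{2n+1}$---so the de Rham Morse polynomial alone cannot produce the bound $n+1$. You correctly flag this as ``the real obstacle'', but it is not merely hard; it is the wrong invariant.

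The paper avoids this entirely by passing to \emph{basic} cohomology $H^\ast_B(\mathcal F)$. There the Morse--Bott function $S$ is perfect by the equivariant basic cohomology results of Goertsches--T\"oben, so when the closed leaves are isolated their number equals $\dim_{\mathbb R}H^\ast_B(\mathcal F)$. The bound $n+1$ then follows from a two-line cohomological observation rather than any convexity: since $\lambda_1\wedge\cdots\wedge\lambda_q\wedge(d\lambda_i)^n$ is a volume form and integration against the characteristic form $\lambda$ descends to basic cohomology, the classes $[d\lambda_i]^j\in H^{2j}_B(\mathcal F)$ are nonzero for $j=0,\dots,n$, whence $\dim H^\ast_B(\mathcal F)\ge n+1$. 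Replacing your convexity input by this argument repairs the proof.
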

\par We can actually provide a better lower bound, depending on specific properties of the contact foliation and on the topology of the ambient manifold.
To this end, we take inspiration from previously known results in Contact Dynamics regarding \emph{K-contact manifolds} \cite{rukimbira_topology_1995} and \emph{metric $f$-K-contact manifolds} \cite{goertsches_topology_2020}.
These are particular types of $f$-structure (in the sense of Yano \cite{yano_structure_1982}) whose automorphisms preserve a specific metric tensor, called a contact metric. 
In order to qualify as a contact metric, a metric tensor must satisfy several compatibility properties concerning the tensor $f$. 
What we discovered is that, surprisingly, the properties that allowed the construction of the Morse-Bott function in
\cite{rukimbira_topology_1995, goertsches_topology_2020} do not really depend on the tensor $f$ or the contact metric, but only on the fact the Reeb fields are \emph{Killing} (for \emph{any metric}). 
We were thus capable of extending the construction of the Morse-Bott function to the much more general class of isometric $q$-contact structures.

\subsection{Structure of the paper}
Section \ref{prelim} discusses the basics of contact foliations and isometric contact actions. 
In Section \ref{equicon}, we define the notions of $C^1$-equicontinuity, conformality and quasiconformality and show how all such notions are equivalent. 
Finally, in Sections \ref{morseTheory} and \ref{cohom}, we relate the basic cohomology of a $C^1$-equicontinuous contact foliation to its closed orbits and the topology of the ambient manifold. 

\section{Preliminaries}\label{prelim}

\subsection{Contact foliations}
\par A contact foliation is a generalisation of the flow of the Reeb vector field determined by a co-orientable contact structure.
The analogue structure determining the contact foliation is called a $q$-contact structure, defined as follows.

\begin{definition}[\emph{$q$-contact manifolds} \cite{almeida_contact_2018}]\label{qcontactstructure}
    Let $n,q$ be positive integers and consider a $2n+q$ dimensional differential manifold $M$. 
    A \textbf{$q$-contact structure} on $M$ is a collection $\vec{\lambda} = (\lambda_1, \cdots, \lambda_q)$ of $q$ linearly independent non-vanishing $1$-forms $\lambda_i$, together with a splitting 
    \begin{equation*}
        TM = \mathcal{R} \oplus \xi
    \end{equation*}
    of the tangent bundle, satisfying the following conditions:
    \begin{itemize}
        \item[(i)] $\xi := \cap_i \ker\lambda_i$;
        \item[(ii)] for every $i$, the restriction $d \lambda_i\rvert_\xi$ is non-degenerate;
        \item[(iii)] for every $i$, one has $\ker d \lambda_i = \mathcal{R}.$
    \end{itemize}
A manifold endowed with such structure is called a \textbf{$q$-contact manifold} and denoted by $(M, \vec{\lambda}, \mathcal{R} \oplus \xi)$, or simply by $M$ when the context permits. 
We call the collection $\{\lambda_i\}$ an \textbf{adapted coframe} for the $q$-contact structure, and the $q$-form
\begin{equation*}
    \lambda := \lambda_1 \wedge \cdots \wedge \lambda_q \neq 0
\end{equation*}
is called the \textbf{characteristic form}. 
The bundles $\mathcal{R}$ and $\xi$ are called the \textbf{Reeb distribution} and \textbf{$q$-contact distribution,} respectively.
\end{definition}

\par Such a structure defines a unique set of $q$ vector fields $R_i$ on $M$, called the \textbf{Reeb vector fields}, satisfying the following properties (cf. \cite{almeida_contact_2018, finamore_contact_2022}).
\begin{lemma}\label{ReebFields}
    There is a unique collection of linearly independent vector fields $R_1, \cdots, R_q$ tangent to $\mathcal{R}$ satisfying the relations 
    \begin{itemize}
        \item[(i)] $\lambda_i(R_j) = \delta_{ij}$;
        \item[(ii)] $[R_i, R_j] = 0$, for all $i, j = 1, \cdots, q$;
        \item[(iii)] $\mathcal{R} = \mathrm{Span}\{R_1, \cdots, R_q\}.$
    \end{itemize}
\end{lemma}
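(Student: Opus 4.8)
The plan is to construct the frame $\{R_j\}$ by pointwise linear algebra and then verify the bracket relations by Cartan calculus. First I observe that, the $\lambda_i$ being pointwise linearly independent, $\xi=\bigcap_i\ker\lambda_i$ has rank $2n$, so $\mathcal R$ has rank $q$; moreover the bundle morphism $\Lambda\colon\mathcal R\to M\times\mathbb R^q$, $v\mapsto(\lambda_1(v),\dots,\lambda_q(v))$, has kernel $\mathcal R\cap\xi=0$ by the directness of the splitting, hence is an isomorphism of rank-$q$ bundles. I define $R_1,\dots,R_q$ as the preimages under $\Lambda$ of the standard constant frame of $M\times\mathbb R^q$; these are tangent to $\mathcal R$, automatically linearly independent, span $\mathcal R$, and satisfy $\lambda_i(R_j)=\delta_{ij}$, giving (i) and (iii). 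Any vector field $X$ tangent to $\mathcal R$ with $\lambda_i(X)=\delta_{ij}$ for all $i$ is mapped by $\Lambda$ to the $j$-th standard vector and hence equals $R_j$, so the collection is unique.

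For the commutativity (ii), the only structural hypothesis used is $\ker d\lambda_k=\mathcal R$ (condition (iii) of Definition \ref{qcontactstructure}); the non-degeneracy of $d\lambda_k|_\xi$ plays no role here. Since each $R_i$ is a section of $\mathcal R=\ker d\lambda_k$, we have $\iota_{R_i}d\lambda_k=0$ for all $i,k$. Cartan's formula then gives $\mathcal L_{R_i}\lambda_k=\iota_{R_i}d\lambda_k+d(\lambda_k(R_i))=d(\delta_{ik})=0$, whence also $\mathcal L_{R_i}d\lambda_k=d\,\mathcal L_{R_i}\lambda_k=0$.

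From here I would run two short computations. Pairing the vanishing Lie derivative with $R_j$ gives $\lambda_k([R_i,R_j])=R_i(\lambda_k(R_j))-(\mathcal L_{R_i}\lambda_k)(R_j)=0$ for all $k$, so $[R_i,R_j]\in\bigcap_k\ker\lambda_k=\xi$. On the other hand, using $\iota_{[X,Y]}=[\mathcal L_X,\iota_Y]$,
\[
\iota_{[R_i,R_j]}d\lambda_k=\mathcal L_{R_i}\bigl(\iota_{R_j}d\lambda_k\bigr)-\iota_{R_j}\bigl(\mathcal L_{R_i}d\lambda_k\bigr)=0,
\]
so $[R_i,R_j]\in\ker d\lambda_k=\mathcal R$ for every $k$. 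Since the splitting is direct, $\mathcal R\cap\xi=0$, and therefore $[R_i,R_j]=0$, which is (ii). The argument is entirely computational, so there is no serious obstacle; the one point that requires care is the recognition that (ii) is forced by $\ker d\lambda_k=\mathcal R$ alone, together with careful bookkeeping of the Cartan-calculus identities. (The statement is also recorded in \cite{almeida_contact_2018,finamore_contact_2022}.)
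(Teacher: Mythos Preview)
Your proof is correct and complete. The paper does not actually prove this lemma: it records the statement and defers to \cite{almeida_contact_2018, finamore_contact_2022} for the argument. Your proposal supplies exactly the standard proof one finds in those references --- construct the $R_j$ by inverting the bundle map $\Lambda\colon\mathcal R\to M\times\mathbb R^q$ induced by the $\lambda_i$, then use Cartan's formula together with $\ker d\lambda_k=\mathcal R$ to force $[R_i,R_j]\in\mathcal R\cap\xi=0$. The bookkeeping is clean and the identification of which hypotheses drive which conclusions (in particular, that non-degeneracy of $d\lambda_k|_\xi$ is irrelevant for (ii)) is accurate.
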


\par Since the Reeb fields are pair-wise commutative, Frobenius Theorem implies that $\mathcal{R}$ integrates to a $q$-dimensional foliation $\mathcal{F}$ on $M$, called the \textbf{contact foliation}.
Note that when $q=1$, the definition becomes that of a contact manifold, and the contact foliation is simply the flow-lines of the Reeb field.
\par More generally, item (ii) in Lemma \ref{ReebFields} above implies that, for $s=1, \cdots, q$, each of the bundles
\[
    \mathcal{R}_s := \mathrm{Span}\{R_1, \cdots, R_s\},
\]
is integrable, with underlying foliation $\mathcal{F}_s$. 
Of course, $\mathcal{R}_q = \mathcal{R}$ and $\mathcal{F}_q = \mathcal{F}$. 
Moreover, the leaves of $\mathcal{F}_{s+1}$ are submanifolds of $M$ foliated by the leaves of $\mathcal{F}_{s}$. For every $s$, the leaves of $\mathcal{F}_s$ can be thought of as the orbits of a locally free action of $\mathbb{R}^s$ on $M$, and therefore their topological type is $\mathbb{R}^l \times \mathbb{T}^{s-l}$ for some $0 \leq l \leq s$, where $\mathbb{T}^{s-l}$ is the $(s-l)$-dimensional torus. 
\par We say that the contact foliation $\mathcal{F}$ satisfies the \emph{weak generalised Weinstein conjecture} if it has a leaf which is not homeomorphic to a plane and that it satisfies the \emph{strong generalised Weinstein conjecture} if it has a toric leaf (cf. \cite{finamore_contact_2022}).
\par Following \cite{asuke_transversely_1998}, we say that a holonomy invariant transverse measure for a foliation $\mathcal{F}$ is \emph{good} if it is non-atomic and the union of all the leaves in its support is the entire ambient manifold $M$.    
\begin{lemma}\label{goodMeasure}
    The measure $\mu_i = \|d\lambda_i^n\|$ is a good measure for $\mathcal{F}$, for $i=1, \cdots, q$.
\end{lemma}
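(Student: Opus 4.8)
The plan is to recognise $d\lambda_i^n$ as a transverse volume form for $\mathcal{F}$ that is invariant under the holonomy pseudogroup, and then to read off the two defining properties of a good measure from the fact that it is given, on each transversal, by a smooth strictly positive density.

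First I would collect the pointwise facts supplied by Definition \ref{qcontactstructure}. From $\ker d\lambda_i = \mathcal{R}$ it follows that $\iota_{R_j} d\lambda_i = 0$ for every Reeb field, hence $\iota_{R_j}(d\lambda_i^n) = n\,(\iota_{R_j}d\lambda_i)\wedge d\lambda_i^{n-1} = 0$; thus $d\lambda_i^n$ is horizontal for $\mathcal{F}$. Again from $\ker d\lambda_i = \mathcal{R}$, the form $d\lambda_i$ descends to a non-degenerate $2$-form on the rank-$2n$ quotient $TM/\mathcal{R}$, so its $n$-th power descends to a nowhere-vanishing $2n$-form there; consequently $d\lambda_i^n$ restricts to a volume form on $T_xT$ for every $2n$-plane $T_xT$ complementary to $\mathcal{R}_x$, in particular on every transversal $T$ to $\mathcal{F}$. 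This produces a smooth transverse measure $\mu_i = \|d\lambda_i^n\|$, given on each transversal by the density of $d\lambda_i^n$.

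Second, I would verify holonomy invariance. Cartan's formula together with $dd\lambda_i = 0$ gives $\mathcal{L}_{R_j}d\lambda_i = d\,\iota_{R_j}d\lambda_i = 0$, whence $\mathcal{L}_{R_j}(d\lambda_i^n) = 0$ for all $j$; since the flows of the $R_j$ commute and locally generate the $\mathbb{R}^q$-action whose orbits are the leaves of $\mathcal{F}$, the form $d\lambda_i^n$ is invariant under all of them. Any local holonomy transformation of the orbit foliation can be written, after projecting along leaves, as $x \mapsto \Phi_{v(x)}(x)$ for a smooth $\mathbb{R}^q$-valued function $v$, and because $d\lambda_i^n$ is both horizontal and flow-invariant the projection correction (whose differential lands in $\mathcal{R}$) does not affect it; hence $\mu_i$ is preserved by the holonomy pseudogroup and is a genuine holonomy-invariant transverse measure.

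Finally, the two items in the definition of goodness are immediate from the shape of $\mu_i$: on every transversal it is absolutely continuous with respect to Lebesgue measure with a strictly positive smooth density, so it is non-atomic, and every nonempty open subset of $M$ meets some transversal in a set of positive $\mu_i$-measure, whence $\mathrm{supp}\,\mu_i = M$ and, a fortiori, the union of the leaves contained in the support is all of $M$. I expect the only step requiring genuine care to be the verification of holonomy invariance — writing a holonomy map of the orbit foliation in the form $x\mapsto\Phi_{v(x)}(x)$ and checking that the time-shift correction is absorbed by horizontality of $d\lambda_i^n$; the remaining verifications are formal. Closedness of $M$ is used only to make the $\mathbb{R}^q$-action globally defined and could be dispensed with by arguing entirely with the holonomy pseudogroup; note also that $d\lambda_i^n$ is closed, although only its horizontality and invariance are needed here.
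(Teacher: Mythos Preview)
Your proposal is correct and follows essentially the same approach as the paper: recognise $d\lambda_i^n$ as a nowhere-vanishing, holonomy-invariant transverse volume form and deduce non-atomicity and full support from its being a smooth positive density on every transversal. The paper's own proof is considerably terser --- it simply asserts that $\mu_i$ is an $\mathcal{F}$-invariant volume form on $\xi$ (citing only the non-degeneracy condition) and then checks full support; your argument fills in the horizontality computation, the Cartan-formula verification of $\mathcal{L}_{R_j}(d\lambda_i^n)=0$, the passage from flow-invariance to holonomy-invariance via the representation $x\mapsto\Phi_{v(x)}(x)$, and the explicit observation that a smooth positive density is non-atomic, none of which the paper spells out.
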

\begin{proof}
   Condition (ii) in Definition \ref{qcontactstructure} is equivalent to $d\lambda_i^n \neq 0$, and therefore $\mu_i$ as defined is a $\mathcal{F}$-invariant volume form on $\xi$. 
    The measure $\mu$ also have full support, because given a transverse section $T$, $x \in M$ and $y \in \mathcal{F}(x) \cap T$, any open set $U \subset T$ containing $y$ is such that $\mu_i(U) > 0$. Thus $\mathcal{F}(x) \in \mathrm{supp}(\mu_i)$, and since $x$ is arbitrary it follows that
    \begin{equation*}
        \bigcup_{\mathcal{F}(x) \in \mathrm{supp}(\mu_i)}\mathcal{F}(x) = M,
    \end{equation*}
    and therefore, $\mu_i$ is a good measure.
\end{proof}
\subsection{Invariant metrics}
\begin{definition}
    We say a contact foliation is \textbf{isometric} if the ambient manifold $M$ supports a metric for which every Reeb field is Killing.
    In other words, a metric on $M$ is invariant under the contact action.
\end{definition}

Recall that a bundle-like metric on a foliated manifold $(M, \mathcal{F})$ is a metric $g$ on $M$ whose restriction to the normal bundle of $\mathcal{F}$ is holonomy invariant.
As shown in \cite[Section 3.2]{finamore_contact_2022}, every bundle-like metric for a $q$-contact foliation can be modified in the Reeb bundle $\mathcal{R}$ to obtain a metric for which every Reeb field is Killing.
Thus, when dealing with closed orbits of Riemannian contact foliations, there is no loss of generality in restricting attention to the class of isometric foliations.

Isometric $q$-contact structures directly generalise some previously known contact-like structures.

\begin{example}[K-contact, Sasakian and metric $f$-K-contact manifolds]
    In a contact manifold $(M, \lambda)$, the pair $(\ker\lambda, d\lambda)$ is a symplectic bundle over $M$.
    Therefore it supports a compatible, almost complex structure $J$. If $R$ is the Reeb field, we extend $J$ to the whole $TM$ by setting $J(R) = 0$.
    The equations
    \begin{align*}
        g(JX, Y) &= d\lambda(X, Y), \\
        g(R, X) &= \lambda(X),
    \end{align*}
    define a Riemannian metric on $M$, called the \textbf{contact metric}.
    When $R$ is Killing with respect to the contact metric $g$, $M$ is said to be a \textbf{K-contact manifold}.
    If this structure satisfies the additional condition 
    \[
        (\nabla_XJ)Y = g(X, Y)R - \lambda(Y)X,
    \]
    the manifold $M$ is said to be \textbf{Sasakian}.
    If the Reeb field is Killing with respect to a metric $g$ (not necessarily a contact metric), then $M$ is said to be an \textbf{R-contact manifold}. (cf. \cite{BlairContactmanifoldsRiemannian1976, blair_riemannian_2010, rukimbira_remarks_1993, rukimbira_topology_1995}).
    For instance, every Brieskorn manifold is Sasakian \cite[Chapter 5, Section 7]{kon_structures_1985}.

    A metric $f$-K-contact manifold generalises the concept of a K-contact manifold in that it allows for more than one Reeb field.
    Precisely, a metric $f$-K-contact manifold $(M, f, g, R_i, \lambda_i)$ consists of an $f$-manifold (cf. \cite{yano_structure_1982}) together with a metric $g$, Killing fields $R_1, \cdots, R_q$ and $1$-forms $\lambda_1, \cdots, \lambda_q$ satisfying the following compatibility conditions:
    \begin{itemize}
        \item[-] $\lambda_i(R_j) = \delta_{ij}$;
        \item[-]  $d\lambda_i(X, Y) = g(fX, Y) \text{ for all } i$; 
        \item[-] $f(R_i) = 0 \text{ for all } i$;       
        \item[-] $\mathrm{Im}f = \cap_i\ker\lambda_i$; 
        \item[-] $(f\rvert_{\mathrm{Im}f})^2 = -\mathrm{id}\rvert_{\mathrm{Im}f}$; 
        \item[-] $g(fX, fY) = g(X,Y) - \sum_i\lambda_i(X)\lambda_i(Y)$.   
    \end{itemize}
    In particular, there is a splitting $TM = \ker f \oplus \mathrm{Im}f$, and the fields $R_i$ span $\ker f$ and are commutative.
    One way to obtain such structures is to consider the mapping tori of automorphisms of K-contact manifolds preserving the K-contact structure (cf. \cite{goertsches_topology_2020}).   
\end{example}

We remark that, in an isometric $q$-contact manifold, there are no compatibility assumptions on the metric $g$; we ask simply that the Reeb fields are all Killing.
Furthermore, note that, in a metric $f$-K-contact structure, the exterior derivatives $d\lambda_i$ are all equal, while in an $q$-contact structure they need not be the same but only have the same kernels. 

\begin{definition}
    A $q$-contact structure defined by an adapted coframe $\{\lambda_1, \cdots, \lambda_q\}$ is said to be \textbf{uniform} if
    \[
        d\lambda_i = d\lambda_j,
    \]
    for every $i, j$.
\end{definition}

The isometric $q$-contact structures defined by metric $f$-K-contact manifolds are uniform.
In general, a $q$-contact structure need not be uniform, as is the case for the $q$-contact structure on $\mathrm{SO}(q, q+n)/\mathrm{SO}(n)$ constructed by Almeida in \cite[Section 3.3.1]{almeida_contact_2018} (for remarks on the different nomenclature used by Almeida in his work, see \cite{finamore_contact_2022}).
Another example of a non-uniform $q$-contact structure is the following.

\begin{example}[Product manifolds]\label{product}
    Suppose $(M_1, \alpha_1)$ and $(M_2, \alpha_2)$ are contact manifolds, with Reeb fields $S_1$ and $S_2$, respectively, and let $M$ be the product manifold $M_1 \times M_2$.
    Then $M$ supports a non-uniform $2$-contact structure.
    Indeed, using the canonical projections $\pi_i: M \to M_i$, for $i=1,2$, let
    \begin{align*}
        X_i &:= \pi_i^\ast S_i, \\
        \sigma_i &:= \pi_i^\ast\alpha_i, \\
        \xi_i &:= \pi_i^\ast\ker\alpha_i.
    \end{align*}
    We define on $M$
    \begin{align*}
        \lambda_+ &:= \sigma_1 + \sigma_2 \\
        \lambda_- &:= \sigma_1 - \sigma_2 
    \end{align*}
    It is clear that $\lambda_+$ and $\lambda_-$ are linearly independent.
    As 
    \[ 
        \ker\lambda_\pm = \xi_1\cap\xi_2 \oplus \mathrm{Span}\{X_1 \mp X_2\}, 
    \] it is immediate that 
    \[ 
        \xi := \ker\lambda_1 \cap \ker\lambda_2 = \xi_1\cap\xi_2.
    \]
    Moreover, if we let 
    \begin{align*}
        R_+ &:= \frac{1}{2}( X_1 + X_2), \\
        R_- &:= \frac{1}{2}( X_1 - X_2)
    \end{align*}
    then $\lambda_i(R_j) = \delta_{ij}$ and it is easy to check that $\mathcal{R} := \mathrm{Span}\{R_+, R_-\}$ satisfies 
    \[
        TM = TM_1 \oplus TM_2 = \mathcal{R} \oplus \xi. 
    \]
    It remains to show that the derivatives $d\lambda_i$ are non-degenerate on $\xi$. 
    Given $Y$ tangent to $\xi$, let $p = (p_1, p_2)$ be a point in $M$, and write $Y = Y_1 \oplus Y_2,$ with $Y_i \in \xi_i \cap TM_i$.
    The equality $d\lambda_i\rvert_p(Y_p, \cdot) \equiv 0$ implies 
    \[
        d\alpha_1\rvert_{p_1}(d\pi_1Y_1\rvert_{p_1}, \cdot) = \pm d\alpha_2\rvert_{p_2}(d\pi_2Y_2\rvert_{p_2}, \cdot),
    \]
    or, in more explicit terms, that for any choice of $Z_i \in T_{p_i} M_i$, we have 
    \[
        d\alpha_1\rvert_{p_1}(d\pi_1Y_1\rvert_{p_1}, Z_1) = \pm d\alpha_2\rvert_{p_2}(d\pi_2Y_2\rvert_{p_2}, Z_2).
    \]
    This can only happen when $d\pi_1Y_1\rvert_{p_1} = d\pi_2Y_2\rvert_{p_2} = 0$, for both $\alpha_1$ and $\alpha_2$ are non-degenerate. Hence 
    \[
        \iota_{Y}d\lambda_i \equiv 0 \iff Y = 0,
    \]
    and the $d\lambda_i$ are non-degenerate on $\xi,$ as we wished. 
    Note that, because the fields $X_1$ and $X_2$ are $\pi_1$-related to $S_1$ and $0$, respectively, it follows that $[X_1,X_2] = [S_1, 0] = 0$, hence $[R_+, R_-] = 0$, and by uniqueness we conclude that ${R_+, R_-}$ are indeed the Reeb fields of the contact action. 
    The Reeb distribution $\mathcal{R}$ can be seen as the span of both $\{R_+, R_-\}$ and $\{X_1, X_2\}$, hence its integral submanifolds are exactly the products of flow-lines of $M_1$ and $M_2$.

    If, moreover, there are metrics $g_i$ on $M_i$ with respect to which the fields $S_i$ are Killing, then 
    \[
        g := \pi_1^\ast g_1 + \pi_2^\ast g_2
    \]
    is a Riemannian metric on $M$ for which the fields $R_+$ and $R_-$ are Killing, so the product of R-contact manifolds is an isometric $2$-contact manifold. 
\end{example}

This construction can be generalised to the product of two $q$-contact structures whose Reeb distributions have the same rank. 
 If $M_1$ has as adapted coframe the collection $\{\lambda_1, \cdots, \lambda_q\}$ and $M_2$ the collection $\{\eta_1, \cdots, \eta_q\}$, then $M_1\times M_2$ admits a $2q$-contact structure with adapted coframe given by the forms
    \begin{align*}
        \lambda_i &:= \pi_1^\ast\alpha_i + \pi_2^\ast\beta_i, \\
        \eta_i &:= \pi_1^\ast\alpha_i - \pi_2^\ast\beta_i, 
    \end{align*}
    for $i = 1, \cdots, q$, and splitting 
    \[
        T(M_1\times M_2) = (\pi_1^\ast\mathcal{R}_1 \oplus \pi_2^\ast\mathcal{R}_2) \oplus (\pi_1^\ast\xi_1 \oplus \pi_2^\ast\xi_2).
    \]

\section{Quasiconformality and equicontinuity}\label{equicon}
The existence of an invariant metric is a geometric property of the manifold. 
We want to describe such property by dynamical means. 
In order to do this, we consider a compactification of the group action. 
Note that the image $F(\mathbb{R}^q)$ of the contact action $F: \mathbb{R}^q \to \mathrm{Diff}(M)$ is the subgroup spanned by all the flows of the Reeb fields $R_i$. 
In other words,
\[
    F(\mathbb{R}^q) = \mathrm{Span}\left\{\exp{(tR_i)}; t \in \mathbb{R}, i = 1, \cdots, q\right\}.
\]
Now, $\mathrm{Diff}(M)$ is a Lie Group when equipped with the $C^1$ compact-open open topology $\tau_1$, its Lie Algebra being that of vector fields on $M$.
    
\begin{definition}[$C^1$-enveloping group]\label{envGrp} Let $F: \mathbb{R}^q \to \mathrm{Diff}(M)$ be an action.
The \textbf{$C^1$-enveloping group of $F$} is the closure 
    \[
        \mathrm{E}^1_{F} := \overline{F(\mathbb{R}^q)}
    \] 
in the Lie group $(\mathrm{Diff}(M), \tau_1)$.
    \end{definition}
\begin{definition}
    The action $F$ is said to be \textbf{$C^1$-equicontinuous} if its $C^1$-enveloping group $\mathrm{E}^1_{F}$ is compact.
 \end{definition} 
 
The $C^1$-enveloping group acts on the manifold $M$ in a natural way, and its orbits are exactly the closures of the leaves of $\mathcal{F}$.

\begin{lemma}\label{orbitsClosuresAreEnvelopingAction}  
    Given a $C^1$-equicontinuous action $F: \mathbb{R}^q \to \mathrm{Diff}(M)$ and $x \in M$, the orbit of $x$ under the action of $\mathrm{E}^1_{F}$ is exactly $\overline{\mathcal{F}(x)}.$
\end{lemma}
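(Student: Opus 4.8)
The plan is to realise the $\mathrm{E}^1_{F}$-orbit of $x$ as the image of the compact group $\mathrm{E}^1_{F}$ under the evaluation map at $x$, and then use compactness to close things up. First I would record the (essentially standard) fact that the evaluation map $\mathrm{ev}_x\colon (\mathrm{Diff}(M),\tau_1) \to M$, $\phi \mapsto \phi(x)$, is continuous: the $C^1$ compact-open topology $\tau_1$ is finer than the $C^0$ compact-open topology, and for the latter the set $\{\phi : \phi(x) \in V\}$ is a subbasic open set for every open $V \ni \phi(x)$ (take the compact set to be $\{x\}$). In particular, $\phi_\alpha \to \phi$ in $\tau_1$ forces $\phi_\alpha(x) \to \phi(x)$ in $M$.

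For the inclusion $\overline{\mathcal{F}(x)} \subseteq \mathrm{E}^1_{F}\cdot x$: since $F$ is $C^1$-equicontinuous, $\mathrm{E}^1_{F}$ is compact, so $\mathrm{E}^1_{F}\cdot x = \mathrm{ev}_x(\mathrm{E}^1_{F})$ is a compact, hence closed, subset of the Hausdorff manifold $M$. Because $F(\mathbb{R}^q) \subseteq \mathrm{E}^1_{F}$, we have $\mathcal{F}(x) = F(\mathbb{R}^q)\cdot x \subseteq \mathrm{E}^1_{F}\cdot x$, and passing to closures gives $\overline{\mathcal{F}(x)} \subseteq \mathrm{E}^1_{F}\cdot x$.

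For the reverse inclusion $\mathrm{E}^1_{F}\cdot x \subseteq \overline{\mathcal{F}(x)}$: given $\phi \in \mathrm{E}^1_{F} = \overline{F(\mathbb{R}^q)}$, choose a net $(\phi_\alpha)$ in $F(\mathbb{R}^q)$ with $\phi_\alpha \to \phi$ in $\tau_1$ (a sequence suffices when $M$ is closed, since then $(\mathrm{Diff}(M),\tau_1)$ is metrizable). By continuity of $\mathrm{ev}_x$ we get $\phi_\alpha(x) \to \phi(x)$; but each $\phi_\alpha(x)$ lies in $F(\mathbb{R}^q)\cdot x = \mathcal{F}(x)$, whence $\phi(x) \in \overline{\mathcal{F}(x)}$. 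Combining the two inclusions yields $\mathrm{E}^1_{F}\cdot x = \overline{\mathcal{F}(x)}$.

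I do not expect a serious obstacle here; the two points worth a little care are the continuity of the evaluation map (for which the $C^1$-topology is in fact overkill, the $C^0$-topology already suffices) and the observation that $C^1$-equicontinuity, i.e.\ compactness of $\mathrm{E}^1_{F}$, is genuinely needed only for the inclusion $\overline{\mathcal{F}(x)} \subseteq \mathrm{E}^1_{F}\cdot x$, in order to know that the orbit $\mathrm{E}^1_{F}\cdot x$ is already closed.
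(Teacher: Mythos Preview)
Your proof is correct and follows essentially the same approach as the paper's: both inclusions rest on compactness of $\mathrm{E}^1_F$ together with continuity of the evaluation map at $x$. Your argument for $\overline{\mathcal{F}(x)} \subseteq \mathrm{E}^1_F\cdot x$ is in fact slightly cleaner than the paper's---you observe directly that $\mathrm{E}^1_F\cdot x = \mathrm{ev}_x(\mathrm{E}^1_F)$ is compact hence closed, whereas the paper passes to a convergent subsequence of $F^{a_n}$ inside $\mathrm{E}^1_F$ (tacitly using metrizability, which you note).
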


\begin{proof}
    Let $y \in \overline{\mathcal{F}(x)}$. Then there is a sequence $a_n \in \mathbb{R}^q$ such that $F^{a_n}(x) \to y$ and since $\mathrm{E}^1_{F}$ is compact, $F^{a_n} \to T \in \mathrm{E}^1_{F}$ (up to a sub-sequence). Hence $y = T(x)$ belongs to the orbit of $x$ under $\mathrm{E}^1_{F}$. Conversely, if $y$ is in the orbit of $x$ under $\mathrm{E}^1_{F}$, then there is $T = \lim F^{a_n} \in \mathrm{E}^1_{F}$ such that $y = T(x) = \lim F^{a_n}(x)$, and therefore $y \in \overline{\mathcal{F}(x)}$. 
\end{proof}

\par We remark that $C^1$-equicontinuity is a relatively strong dynamic condition.
It means the family $F(\mathbb{R}^q)$ is equicontinuous in the classic sense, and each of the families of derivatives $\{d F^a_x; a \in \mathbb{R}^q\}$ is equicontinuous, for every $x \in M$. 
As it turns out, being $C^1$-equicontinuous and being an isometric action are equivalent conditions.

 \begin{theorem}\label{C1EquicontinuityIsEquivalentToInvariantMetric}
     Let $F: \mathbb{R}^q \to \mathrm{Diff}(M)$ be a $C^1$-equicontinuous action. Then $M$ supports an $F$-invariant metric.
 \end{theorem}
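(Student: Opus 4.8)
The plan is the classical averaging argument: since $\mathrm{E}^1_F$ is compact, I would pull back a fixed background metric by every element of $\mathrm{E}^1_F$ and integrate against its Haar measure. First I would record the structure available. By $C^1$-equicontinuity, $\mathrm{E}^1_F = \overline{F(\mathbb{R}^q)}$ is a compact subgroup of $(\mathrm{Diff}(M), \tau_1)$, and it acts on $M$ through the evaluation map $\mathrm{E}^1_F \times M \to M$, $(T,x)\mapsto T(x)$, which is continuous for the $C^1$ (hence also $C^0$) topology. Although an element of $\mathrm{E}^1_F$ is a priori only a $C^1$ diffeomorphism, the fact that $\mathrm{E}^1_F$ is a compact group acting by $C^1$ diffeomorphisms on a smooth manifold upgrades this: by the Bochner--Montgomery theorem on differentiable transformation groups, $\mathrm{E}^1_F$ is a compact Lie group, each of its elements is a smooth diffeomorphism, and the action map is smooth. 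Let $\nu$ denote the bi-invariant Haar probability measure on $\mathrm{E}^1_F$.

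Next I would carry out the averaging. Fix any Riemannian metric $g_0$ on $M$ and set, for $x\in M$ and $u,v\in T_xM$,
\[
    g_x(u,v) := \int_{\mathrm{E}^1_F} (T^\ast g_0)_x(u,v)\, d\nu(T) = \int_{\mathrm{E}^1_F} (g_0)_{T(x)}\!\left(dT_x u,\, dT_x v\right) d\nu(T).
\]
The integrand is continuous in $T$ (the $C^1$ topology controls $T$ and $dT$ uniformly on compacta), so the integral is well defined; smoothness of the action together with compactness of $\mathrm{E}^1_F$ lets one differentiate under the integral sign, so $g$ is a smooth symmetric $2$-tensor field. For $u\neq 0$ the integrand $(T^\ast g_0)_x(u,u)$ is strictly positive for every $T$, and $\nu$ is a probability measure, so $g_x(u,u)>0$; hence $g$ is a Riemannian metric on $M$.

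Finally I would check invariance. For $S\in\mathrm{E}^1_F$ and $u,v\in T_xM$, the chain rule gives
\[
    (S^\ast g)_x(u,v) = g_{S(x)}\!\left(dS_x u,\, dS_x v\right) = \int_{\mathrm{E}^1_F} \left((TS)^\ast g_0\right)_x(u,v)\, d\nu(T) = \int_{\mathrm{E}^1_F} (T^\ast g_0)_x(u,v)\, d\nu(T) = g_x(u,v),
\]
the third equality being the substitution $T\mapsto TS$ together with the right-invariance of $\nu$. Thus every element of $\mathrm{E}^1_F$ is an isometry of $g$; since $F(\mathbb{R}^q)\subseteq\mathrm{E}^1_F$, in particular every $F^a$ is, so $g$ is $F$-invariant --- equivalently, every Reeb field is Killing for $g$. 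I expect the only real obstacle to be the regularity bookkeeping: ensuring that $\mathrm{E}^1_F$ is genuinely a compact Lie group acting smoothly on $M$, so that the averaged tensor is smooth and the displayed computation is legitimate. This is exactly where Bochner--Montgomery --- or, alternatively, the closed-subgroup theorem applied inside the Lie group $(\mathrm{Diff}(M),\tau_1)$ --- is needed; everything else is the standard compact-group averaging.
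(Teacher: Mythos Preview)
Your proposal is correct and follows essentially the same approach as the paper: average an arbitrary background metric over the compact enveloping group $\mathrm{E}^1_F$ with respect to its Haar measure, then verify invariance via the translation-invariance of Haar measure. If anything, you are more careful than the paper about the regularity bookkeeping, explicitly invoking Bochner--Montgomery to ensure $\mathrm{E}^1_F$ is a compact Lie group acting smoothly so that the averaged tensor is genuinely a smooth Riemannian metric; the paper simply asserts smoothness without this justification.
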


 \begin{proof}
     Let $g_0$ be any Riemannian metric on $M$. By hypothesis, $\mathrm{E}^1_{F}$ is a compact Lie group.
     Let $\mu$ be a Haar measure defined on the Borel $\sigma$-algebra of $\mathrm{E}^1_{F}$. 
     For each $p \in M$ and $X, Y \in T_pM$ we define a function
     \[
     \begin{aligned}\label{integralFunction1}
         \mathrm{E}^1_{F} &\to \mathbb{R} \\
         e &\mapsto (e^\ast g_0)_p(X,Y) = g_0\rvert_{e(p)}(d e_pX, d e_pY).
     \end{aligned}
     \]
     Note that two elements of $\mathrm{E}^1_{F}$ are close in the $C^1$ topology if they are close in the compact-open topology and their derivatives are close as transformations.
     This, together with the fact that $g_0$ is smooth, implies that the function defined by the mapping \ref{integralFunction1} is continuous.
     We defined a metric tensor $g$ on $M$ by averaging the metric $g_0$:
     \[
        g_p(X,Y) = \int_{\mathrm{E}^1_{F}}(e^\ast g_0)_p(X,Y)d\mu(e).
     \]
 
 This is a Riemannian metric since it is smooth, bi-linear and positive-definite.
 Moreover, it is invariant under the action of $\mathrm{E}^1_{F}$, as for any $f \in \mathrm{E}^1_{F}$ one has
 \begin{align*}
     (f^\ast g)_p(X,Y) &= g_0\rvert{f(p)}(d f_pX, d f_pY) \\
     &= \int_{e\in\mathrm{E}^1_{F}}e^\ast g_0\rvert_{f(p)}(d f_pX, d f_pY) \\
     &= \int_{e\in\mathrm{E}^1_{F}}(ef)^\ast g_0\rvert_p(X,Y) \\
     &= g_p(X,Y),
 \end{align*}
 thus $f^\ast g = g$.
 In particular, $(F^a)^\ast g = g$ for every $a \in \mathbb{R}^q,$ that is, $F$ is an isometric contact action.
\end{proof}

Thus, $C^1$-equicontinuity is simply a dynamical expression of the geometric property of preserving a metric. 

\par Every $C^1$-equicontinuous action is, in particular, equicontinuous and, therefore, strongly recurrent. 
To be more precise, the action $F$ is uniformly almost periodic \cite[Theorem 2.2]{auslander1988minimal} so that every leaf $\mathcal{F}(x)$ returns arbitrarily close to $x$ infinitely often. 
For $C^1$-equicontinuous contact actions, it was shown in \cite{finamore_contact_2022} that there are at least two actual closed orbits.
In Section \ref{morseTheory}, we will use Morse Theory to improve this lower bound.

\subsection{Quasiconformal and conformal contact foliations.}

Suppose $M$ is a $q$-contact manifold with a Riemannian metric $g$. At a point $x \in M$ and an element $a \in \mathbb{R}^q$, we consider the mappings 
\begin{align*}
    L_F(x, a) &:= \max\{ \lvert d F^a_x v \rvert ~;~ v\in \xi_ x \text{ and } \lvert v \vert = 1 \}, \\
    l_F(x, a) &:= \min\{ \lvert d F^a_x v \rvert ~;~  v\in \xi_ x \text{ and } \lvert v \vert = 1 \}.
\end{align*}
Together, these quantities define the $\xi$-\emph{eccentricity} of the action $F$ as the ratio
\[
E_F(x,a) := \frac{L_F(x, a)}{l_F(x,a)}.
\]
This map measures how much the unity ball in $\xi_x$ is deformed into an ellipsoid in $\xi_{F^a(x)}$ by the transformation $d F^a_x$.

\begin{definition}[Quasiconformal contact action]

We say the contact action $F$ is \textbf{quasiconformal} with respect to the metric $g$ if its $\xi$-eccentricity mapping
\[
E_F: M \times \mathbb{R}^q \to \mathbb{R}
\]
is globally bounded. Moreover, if $E_F \equiv 1$, we say the contact action is \textbf{conformal}.
\end{definition}

In the case of a contact manifold $(M, \lambda)$,  the action $F$ is simply the flow of the Reeb field $R_\lambda$, and we say that the field $R_\lambda$ is quasiconformal.
\par We remark that for compact $M$, quasiconformality does not depend on the metric since any two Riemannian metrics $g, g'$ are quasi-isometric. This is not true for conformality, as a change of metrics will generally change the function $E_F$. This is because quasiconformality is a \emph{dynamical} property, whereas conformality is mostly a \emph{geometric} one. Indeed, it is not hard to see that $E_F = 1$ for a metric $g$ if and only if there is a positive function $f: M\to\mathbb{R}$ such that $(F^a)^\ast g = fg$. In other words, the contact action $F$ supports an invariant conformal structure, i.e, a class $[g]$ of Riemannian metrics all of which are multiples of each other by positive functions.

\begin{lemma}\label{contactHolonomiesAreSymplectomorphisms}
    Let $T$ be a complete transversal for the contact foliation $\mathcal{F}$. If the action is conformal, then the holonomy pseudogroup of $\mathcal{F}$ with respect to $T$ consists of conformal transformations. 
\end{lemma}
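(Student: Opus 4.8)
The plan is to unpack what conformality of the contact action tells us about the transverse geometry, and then translate that into a statement about the holonomy pseudogroup.

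First I would recall that $E_F \equiv 1$ is equivalent (as remarked just before Lemma \ref{contactHolonomiesAreSymplectomorphisms}) to the existence of a positive function $f \colon M \to \mathbb{R}$ with $(F^a)^\ast g = fg$ for every $a$, i.e.\ an invariant conformal structure $[g]$ on $M$. The key point is that the leaves of $\mathcal{F}$ are the orbits of the contact action $F$, so the holonomy pseudogroup of $\mathcal{F}$ with respect to $T$ is generated by the maps obtained by flowing along the Reeb fields from one plaque of $T$ to another; each such holonomy germ is, up to composition with the natural identifications, the restriction of some $F^a$ followed by projection along the leaf direction. Since $T$ is a complete transversal, at each point it is transverse to $\mathcal{R}$ and we may (shrinking if necessary) assume $T_x T = \xi_x$ for $x \in T$, or at least identify $T_xT$ with $\xi_x$ via the projection $TM = \mathcal{R} \oplus \xi \to \xi$; under this identification the induced metric on $T$ is (conformal to) the restriction of $g$ to $\xi$.

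The heart of the argument is then the following computation: if $h$ is a holonomy germ sending $x \in T$ to $y \in T$, realised by the time-$a$ map $F^a$, then $dh_x \colon T_xT \to T_yT$ is $dF^a_x$ composed with the projections, hence on $\xi_x$ it equals $dF^a_x$ followed by $\pi_\xi$. But $(F^a)^\ast g = fg$ means $dF^a_x$ scales all lengths on $T_xM$ by $\sqrt{f(x)}$; restricting to $\xi_x$ and noting that $dF^a$ preserves the splitting $\mathcal{R} \oplus \xi$ (because it is generated by Reeb flows, which preserve $\mathcal{R}$ and $\xi$), we get that $dh_x$ scales the transverse metric by the conformal factor $\sqrt{f(x)}$. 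Hence each holonomy germ is a conformal transformation of $(T, g|_T)$, which is the claim.

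The main obstacle I anticipate is making precise — without excessive fuss — the identification between the transverse geometry on $T$ and the restriction of $g$ to $\xi$, together with the assertion that $dF^a$ preserves the bundle $\xi$. The latter is essentially immediate because $F$ is generated by the flows of the Reeb fields $R_i$, and the Reeb distribution (hence $\xi = \cap_i \ker\lambda_i$, once one checks the $\lambda_i$ are scaled appropriately under the flow) is preserved; alternatively one argues directly that $\xi$ is the $g$-orthogonal complement of $\mathcal{R}$ up to the conformal factor, which $F$ preserves. One should also be slightly careful that a holonomy germ may be realised by composing several Reeb-flow segments, but since the conformal class $[g]$ is invariant under \emph{every} $F^a$, the composite is again conformal; conformal maps form a pseudogroup, so this causes no difficulty.
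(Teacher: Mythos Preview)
Your proposal is correct and follows essentially the same approach as the paper: both arguments observe that a holonomy germ is locally realised by the contact action $F$ (the paper writes $h(u) = F(\tau(u), u)$ for a suitable $\tau$), and since each $F^a$ preserves the conformal class $[g]$, so does $h$. You are somewhat more explicit than the paper about identifying $T_xT$ with $\xi_x$ and about why $dF^a$ preserves the splitting $\mathcal{R}\oplus\xi$, but the core idea is identical.
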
 

\begin{proof}
    Let $h: D(h) \to R(h)$ be a holonomy transformation of $\mathcal{F}$. We are going to show that every point of $D(h)$ has a neighbourhood restricted to which $h$ preserves the conformal structure $[g]$, and therefore $h: D(h) \to R(h)$ is a conformal transformation. Now, since $\mathcal{F}$ is the orbit foliation of an action $F: \mathbb{R}^q \times M \to M$, given $x \in D(h)$, there is an open set $U \subset D(h)$ containing $x$, and a function $\tau: U \to \mathbb{R}^q$, such that $h(u) = F(\tau(u), u)$ for every $u \in U$. However, $F$ is a composition of the flows of the Reeb field, each of which preserves the conformal structure $[g]$. Hence $F$, and therefore $h$, preserve the conformal structure $[g]$ on the open set $U$.
\end{proof}

\begin{corollary}\label{conformallyFlat}
    The underlying foliation of a conformal contact action is a \emph{transversely flat conformal foliation}, in the sense of \cite{asuke_transversely_1996}.
\end{corollary}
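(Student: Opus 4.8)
The plan is to unwind the two definitions — ``transversely flat conformal foliation'' in the sense of \cite{asuke_transversely_1996} and the conclusion of Lemma \ref{contactHolonomiesAreSymplectomorphisms} — and verify that the latter supplies exactly the data required by the former. Recall that a transversely conformal foliation of codimension $m$ is one modelled on the conformal group $\mathrm{Conf}(S^m)$ acting on the round sphere $S^m$; equivalently, it carries a Haefliger cocycle with values in $\mathrm{Conf}(S^m)$, and ``transversely flat'' means the transition maps are restrictions of global conformal automorphisms of $S^m$ (i.e.\ M\"obius transformations), so the foliation is $(\mathrm{Conf}(S^m), S^m)$-flat. Since $\mathcal{F}$ has codimension $2n$, here $m = 2n$.

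First I would fix a complete transversal $T$ for $\mathcal{F}$ and invoke Lemma \ref{contactHolonomiesAreSymplectomorphisms}: the holonomy pseudogroup on $T$ consists of conformal transformations with respect to the transverse conformal structure induced by $[g]$ (note $[g]$ descends to a genuine transverse conformal structure on $T$ precisely because, by the computation preceding Lemma \ref{contactHolonomiesAreSymplectomorphisms}, $(F^a)^\ast g = f g$, so the normal bundle metric is well defined up to scale and is holonomy invariant up to scale). This already exhibits $\mathcal{F}$ as a transversely conformal foliation of codimension $2n$. Next I would upgrade ``conformal'' to ``flat conformal'' by the classical Liouville-type argument: a conformal diffeomorphism between connected open subsets of a conformally flat Riemannian manifold of dimension $\geq 3$ extends uniquely to a global M\"obius transformation of $S^m$ (Liouville's theorem; for $m=2$ one instead restricts to the orientation-preserving case and uses that a conformal map of planar domains is holomorphic, hence locally M\"obius, and one passes to the $(\mathrm{PSL}(2,\mathbb{C}), S^2)$ structure — this is exactly the regime \cite{asuke_transversely_1996} works in). The one hypothesis to check for Liouville is that the transverse conformal structure is itself conformally \emph{flat}; but here the transversal inherits its conformal class from the contact distribution $\xi$, and I would argue flatness either by taking $T$ small enough that the induced metric is conformally flat is \emph{not} automatic — so instead the cleaner route is: the holonomy germs are conformal for the fixed local metric representatives, and since the structural pseudogroup generated by conformal germs of $\mathbb{R}^m$ is, after completing, the M\"obius pseudogroup on $S^m$ (again Liouville for $m\geq 3$), the Haefliger cocycle can be taken with values in $\mathrm{Conf}(S^m)$ and locally constant transition data, which is the definition of transverse flatness.

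The key steps, in order: (1) record that $[g]$ defines a holonomy-invariant transverse conformal structure on a complete transversal $T$, using $(F^a)^\ast g = fg$; (2) apply Lemma \ref{contactHolonomiesAreSymplectomorphisms} to see the holonomy pseudogroup acts conformally; (3) invoke Liouville's theorem (for $2n \geq 3$) or the Riemann-surface uniformisation picture (for $2n = 2$) to extend each holonomy germ to a global M\"obius transformation, thereby producing a Haefliger $(\mathrm{Conf}(S^{2n}), S^{2n})$-cocycle with M\"obius transition maps; (4) conclude that $\mathcal{F}$ is transversely flat conformal in the sense of \cite{asuke_transversely_1996}. I expect step (3) to be the main obstacle — specifically, making the passage from local conformal germs to the global M\"obius pseudogroup fully rigorous requires care about whether the transverse conformal structure is flat and about the $2n=2$ exceptional dimension, and one must cite the precise version of Liouville's theorem and confirm it matches the flatness convention used by Asuke. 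The remaining steps are essentially bookkeeping on top of Lemma \ref{contactHolonomiesAreSymplectomorphisms}.
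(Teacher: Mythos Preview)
The paper offers no proof of this corollary at all; it is stated immediately after Lemma~\ref{contactHolonomiesAreSymplectomorphisms} as a direct consequence, with no further argument. So your proposal is not so much a different route as an attempt to fill in what the paper leaves implicit.

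That said, your step~(3) has a genuine gap. Liouville's theorem says that a conformal diffeomorphism between open subsets of a \emph{conformally flat} manifold of dimension $\geq 3$ extends to a M\"obius transformation; it takes flatness of the conformal structure as a hypothesis, not as a conclusion. Your ``cleaner route'' asserts that the pseudogroup generated by conformal germs of $\mathbb{R}^m$ is the M\"obius pseudogroup, but this is only true for the \emph{standard} flat conformal structure on $\mathbb{R}^m$ --- for a generic conformal class in dimension $\geq 4$ the local conformal automorphisms are not M\"obius, and Liouville simply does not apply. So the argument as written is circular: you invoke Liouville to obtain flatness, but Liouville needs flatness to start. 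Nothing in the $q$-contact setup forces the transverse conformal class $[g]\rvert_\xi$ to be conformally flat.

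Whether this is a mathematical gap or a terminological one hinges on exactly what Asuke means by ``transversely flat conformal foliation.'' If his definition is the $(\mathrm{Conf}(S^{2n}),S^{2n})$-structure you describe, then neither the paper nor your proposal establishes it for $2n\geq 4$, and the downstream use in Theorem~A would need revisiting. If instead Asuke's notion coincides with (or his Theorem~A in \cite{asuke_transversely_1998} only requires) a holonomy-invariant transverse conformal class, then your steps~(1)--(2) already suffice and your step~(3) is unnecessary --- which is presumably why the paper treats the corollary as immediate. You should check Asuke's definition directly before committing to the Liouville strategy.
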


In general, in this order, Riemannian, conformal and quasiconformal foliations form a hierarchy of weaker properties. In the case of contact foliations, however, all these classes are the same, as the next results show.

\begin{lemma}\label{qscfImpliesCf}
     If a contact action $F$ on a compact manifold $M$ is quasiconformal, then there is a conformal structure $\gamma$ preserved by $F$.
\end{lemma}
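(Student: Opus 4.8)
The plan is to produce an $F$-invariant conformal structure by averaging, exactly as in the proof of Theorem~\ref{C1EquicontinuityIsEquivalentToInvariantMetric}, but replacing the role of the (possibly non-compact) enveloping group by a compact group acting on a suitable bundle of conformal frames. The key observation is that quasiconformality with respect to a fixed background metric $g$ means precisely that the linear isomorphisms $dF^a_x\colon \xi_x \to \xi_{F^a(x)}$ have uniformly bounded eccentricity; polar decomposition then lets one factor each $dF^a_x$ (restricted to $\xi$) as an orthogonal map composed with a positive-definite symmetric map whose condition number is bounded by $\sup E_F$. Thus, although the derivative cocycle need not be bounded in $GL(\xi)$, its image in the quotient $GL(\xi)/\mathbb{R}_{>0} = \mathrm{CO}(\xi)$ (conformal linear group modulo scalars, i.e.\ $PGL \cap$ conformal) is \emph{precompact}: the condition number is exactly the function descending to this quotient, and a bounded-condition-number set of conformal classes of inner products on $\xi_x$ is precompact.

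Concretely, first I would fix a background metric $g$ on $M$ and, for each $x$, consider the space $\mathcal{C}_x$ of conformal classes of inner products on $\xi_x$; this is $\mathrm{GL}(\xi_x)/\mathrm{CO}(\xi_x)$, a nonpositively curved symmetric space (a copy of $\mathrm{SL}(2n)/\mathrm{SO}(2n)$ after normalising determinants), and in particular it is a complete metric space with convex distance function. Second, I would observe that quasiconformality says the orbit $\{(F^a)^\ast[g]_x : a \in \mathbb{R}^q\}$ lies in a ball of radius $\log \sup E_F$ about $[g]_x$ in $\mathcal{C}_x$, hence in a compact set $K_x$; moreover this bound is uniform in $x$. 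Third, I would take the \emph{circumcenter} (the center of the smallest enclosing ball, which exists and is unique in a complete $\mathrm{CAT}(0)$ space) of the closure of this orbit, and call it $\gamma_x \in \mathcal{C}_x$. Since $F^b$ maps the orbit $\{(F^a)^\ast[g]_x\}$ to the orbit through $F^b(x)$ and is a conformal-linear isometry between the relevant symmetric spaces, it carries circumcenters to circumcenters, so $(F^b)^\ast \gamma_{F^b(x)} = \gamma_x$: the conformal structure $\gamma = \{\gamma_x\}$ is $F$-invariant. Finally I would check smoothness of $\gamma$: the circumcenter depends continuously, indeed smoothly, on the compact family of conformal classes, because in a symmetric space it is characterised by a strictly convex variational problem with smooth dependence on the data, and the data $\{(F^a)^\ast[g]_x\}$ vary smoothly with $x$ (one may equally use a convex combination / barycenter construction if circumcenters are delicate to differentiate).

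The main obstacle I anticipate is the \textbf{smoothness} of the resulting conformal structure, not its existence or invariance. The averaging argument in Theorem~\ref{C1EquicontinuityIsEquivalentToInvariantMetric} gets smoothness for free from integrating a smooth integrand against a fixed Haar measure on a compact group; here there is no group integral available, only a minimax over a noncompact parameter space $\mathbb{R}^q$ (or a circumcenter in a symmetric space), so one must argue that the infimum is attained, is unique, and varies smoothly with $x$. I expect to handle this either by exploiting strict convexity of the squared distance in the $\mathrm{CAT}(0)$ space $\mathcal{C}_x$ (uniqueness plus an implicit-function-theorem argument for smoothness), or — perhaps more cleanly — by first passing to the $C^1$-enveloping group: one shows quasiconformality forces the derivative cocycle projected to $\mathrm{CO}(\xi)$ to have compact closure, promotes this to a compact group of bundle automorphisms covering an isometric action on the normalised conformal-frame bundle, and then averages an arbitrary conformal structure over that compact group exactly as before. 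The latter route reduces the lemma to a direct analogue of the already-proven theorem and is probably the one I would write up; the former route is more self-contained but the smooth dependence of the circumcenter is the delicate point. Either way, the statement that there exists a conformal (hence invariant conformal-structure) refinement should follow, setting up the final equivalence $(v)\Rightarrow(vi)$ in Theorem~A.
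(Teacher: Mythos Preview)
Your proposal is correct and follows essentially the same route as the paper: the paper attributes the result to Tukia and sketches exactly the circumcenter argument you describe---the space of conformal structures on $\xi_x$ is a nonpositively curved symmetric space, quasiconformality bounds the $F$-orbit of a background conformal class, and the unique centre of the minimal enclosing ball furnishes the invariant conformal structure. The paper does not address the smoothness issue you flag, deferring instead to Tukia \cite{tukia_quasiconformal_1986} and Sadovskaya \cite{sadovskaya_uniformly_2005} for the details.
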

\begin{proof}
    This is a theorem of Tukia \cite{tukia_quasiconformal_1986} for quasiconformal actions of general groups. 
    In the case of contact actions, the proof goes roughly as follows. 
    The space $C^\xi(x)$ of all conformal structures in $\xi_x$ can be endowed with a canonical metric making it into a non-positively curved space. Given a conformal structure $\tau_0 \in C^\xi(F^a(x))$, the subset
    \begin{equation*}
        C(x) := \{(dF_x^a)^{-1}(\tau_0(F^a(x))); a \in \mathbb{R}^q\}
    \end{equation*}
    is a bounded subset of $C^\xi(x)$, due to the quasiconformality of $F$. Since $C^\xi(x)$ is of non-positive curvature, this implies the existence of a unique ball of minimal radius containing $C(x)$. It can be shown that the centre of such a ball is an $F$-invariant conformal structure.
    
    In a different context, the same arguments were used by Sadovskaya to prove a similar result for quasiconformal Anosov actions (cf. \cite[Proposition 3.1]{sadovskaya_uniformly_2005})
\end{proof}

\begin{theoremA}\label{C1EquicontinuityCharacterisation}
    Let $\mathcal{F}$ be a contact foliation on a closed manifold $M$. The following are equivalent.
    \begin{itemize}
        \item[(i)] $\mathcal{F}$ is $C^1$-equicontinuous;
        \item[(ii)] the $C^1$-enveloping group $\mathrm{E}^1_{F}$ is a torus;
        \item[(iii)] $\mathcal{F}$ admits an invariant metric;
        \item [(iv)] $\mathcal{F}$ admits a bundle-like metric;
        \item[(v)] $\mathcal{F}$ is quasiconformal;
        \item[(vi)] $\mathcal{F}$ admits an invariant conformal structure.
    \end{itemize}
\end{theoremA}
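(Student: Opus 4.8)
The plan is to organise the six conditions into a closed chain of implications, leaning on the results already available in the excerpt and isolating the one genuinely new step, namely the passage from an invariant conformal structure to an invariant metric.

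I would first clear away the formal implications. For $(iii)\Rightarrow(iv)$: an $F$-invariant metric on $M$ satisfies $\mathcal{L}_{R_i}g=0$, which in particular forces its restriction to the normal bundle of $\mathcal{F}$ to be holonomy-invariant, the holonomy pseudogroup being generated by the isometries $F^a$; and $(iv)\Rightarrow(iii)$ is precisely the modification in the Reeb bundle recalled from \cite[Section 3.2]{finamore_contact_2022}. For $(iii)\Rightarrow(v)$: if $(F^a)^\ast g=g$ then $|dF^a_x v|=|v|$ for every $v\in\xi_x$, so $E_F\equiv 1$ and $F$ is in fact conformal, \emph{a fortiori} quasiconformal. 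The step $(v)\Rightarrow(vi)$ is Lemma \ref{qscfImpliesCf}. For $(iii)\Rightarrow(ii)$: since $M$ is closed, Myers--Steenrod makes $\mathrm{Isom}(M,g)$ a compact Lie group, hence $\tau_1$-closed in $\mathrm{Diff}(M)$, and it contains $F(\mathbb{R}^q)$; therefore $\mathrm{E}^1_F=\overline{F(\mathbb{R}^q)}$ is a closed subgroup of $\mathrm{Isom}(M,g)$, and it is connected and abelian as the closure of the image of $\mathbb{R}^q$ under the continuous homomorphism $F$, hence a compact connected abelian Lie group, i.e.\ a torus. Trivially $(ii)\Rightarrow(i)$, a torus being compact, and $(i)\Rightarrow(iii)$ is Theorem \ref{C1EquicontinuityIsEquivalentToInvariantMetric}.

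What remains, and what I expect to be the crux, is $(vi)\Rightarrow(iii)$: turning an $F$-invariant conformal structure $\gamma$ on $\xi$ into an $F$-invariant metric on $M$. The idea is to rigidify $\gamma$ with a canonical transverse volume. First, each Reeb field satisfies $\mathcal{L}_{R_j}\lambda_i=d(\lambda_i(R_j))+\iota_{R_j}d\lambda_i=0$, since $R_j\in\ker d\lambda_i=\mathcal{R}$; hence $F$ preserves every $\lambda_i$, and so preserves the splitting $TM=\mathcal{R}\oplus\xi$, the frame $\{R_1,\dots,R_q\}$ of $\mathcal{R}$ (the flows being pairwise commuting), and the $2n$-form $d\lambda_1^n$, whose restriction to $\xi$ is a nowhere-vanishing volume form because $d\lambda_1|_\xi$ is non-degenerate. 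On a vector bundle, a conformal structure together with a volume form determines a unique compatible fibre metric; applied to $\gamma$ and $d\lambda_1^n|_\xi$ this produces an $F$-invariant fibre metric $g_\xi$ on $\xi$. I would then extend it to $M$ by declaring $\mathcal{R}\perp\xi$ and $\{R_1,\dots,R_q\}$ orthonormal; every ingredient is $F$-invariant, so the resulting metric $g$ satisfies $(F^a)^\ast g=g$, i.e.\ every Reeb field is Killing.

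With $(vi)\Rightarrow(iii)$ in hand, the implications combine into $(i)\Leftrightarrow(ii)\Leftrightarrow(iii)\Leftrightarrow(iv)$ together with $(iii)\Rightarrow(v)\Rightarrow(vi)\Rightarrow(iii)$, which yields the full equivalence. The point I would be most careful to pin down is the exact meaning of ``invariant conformal structure'' in $(vi)$: it should be the section of the bundle of conformal structures on $\xi$ delivered by Tukia's argument in Lemma \ref{qscfImpliesCf}, and one must check that the volume-form rigidification above is carried out fibrewise and smoothly; granting that, the remaining verifications are routine.
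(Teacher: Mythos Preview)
Your proof is correct and tracks the paper's argument for all implications except $(vi)\Rightarrow(iii)$, where the approaches genuinely diverge. The paper routes this step through the theory of transversely conformal foliations: it notes (Corollary~\ref{conformallyFlat}) that $\mathcal{F}$ is transversely flat conformal, invokes Lemma~\ref{goodMeasure} to exhibit $\|d\lambda_1^n\|$ as a good holonomy-invariant transverse measure, and then cites Asuke's theorem \cite[Theorem~A]{asuke_transversely_1998} that any transversely conformal foliation carrying a good measure is Riemannian, hence admits a bundle-like metric. Your argument is more direct and self-contained: the pointwise linear-algebraic fact that a conformal class on a vector space together with a volume element singles out a unique compatible inner product, applied fibrewise to the $F$-invariant pair $(\gamma,\,d\lambda_1^n|_\xi)$ on $\xi$, produces an $F$-invariant transverse metric in one stroke, which you then extend over $\mathcal{R}$ using the invariant frame $\{R_i\}$. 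Both arguments ultimately exploit the same invariant transverse volume $d\lambda_1^n$; yours avoids the detour through Asuke's general theorem, while the paper's route situates the result within the broader framework of conformal foliations. The smoothness caveat you flag is real but belongs to the step $(v)\Rightarrow(vi)$ (regularity of the Tukia circumcentre construction) rather than to your rigidification, which is manifestly smooth once $\gamma$ is.
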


\begin{proof}
First, let us show that (i), (ii), (iii) and (iv) are equivalent. 

\vspace{\baselineskip} \noindent \emph{(i) $\implies$ (ii)}: using Theorem \ref{C1EquicontinuityIsEquivalentToInvariantMetric}, we choose an invariant metric $g$ on $M$. 
By Myers-Steenrood theorem, \cite{myers_group_1939}, the space $\mathrm{Iso}(M,g)$ is a compact Lie group, hence $\mathrm{E}^1_{F}$ is a compact Abelian Lie subgroup, and therefore a torus.

\vspace{\baselineskip} \noindent\emph{(ii) $\implies$ (iii)}: this is a particular case of the averaging procedure of Theorem \ref{C1EquicontinuityIsEquivalentToInvariantMetric}. 

\vspace{\baselineskip} \noindent \emph{(iii) $\implies$ (iv)}: if (iii) holds, then the restriction of an invariant metric $g$ to the normal bundle $\xi$ is bundle-like; thus, (iv) also holds.

\vspace{\baselineskip} \noindent \emph{(iv) $\implies$ (i)}: if $g^\tau$ is a bundle-like metric on $\xi$, then 
\[
    g = g^\tau \oplus \sum_{i=1}^q(\lambda_i\otimes\lambda_i)
\]
is an invariant metric on $M$ (cf. \cite{finamore_contact_2022}). 
Thus $\mathrm{E}^1_{F}$ is a closed subspace of the Hausdorff compact space $\mathrm{Iso}(M,g)$. 
Therefore (i) is also true.
This shows the equivalence of the first four assertions. 
\par Now, we show that the last two items are equivalent to (iii). That (iii) implies (v) and (vi) is straightforward because the invariant metric $g$ defines an $\mathcal{F}$-invariant conformal class $[g]$. 
As for the remaining implications, we have

\vspace{\baselineskip} \noindent \emph{(v) $\implies$ (vi)}: this is Lemma \ref{qscfImpliesCf}.

\vspace{\baselineskip} \noindent \emph{(vi) $\implies$ (iii)}: if (vi) holds, then $\mathcal{F}$ is a transversely flat conformal foliation, as noted in Corollary \ref{conformallyFlat}. Now, the non-degenerate form $d\lambda_1$ induces a \emph{good measure} for the foliation $\mathcal{F}$, according to Lemma \ref{goodMeasure}. Every transversely conformal foliation supporting a good measure is, up to a  choice of metrics, a Riemannian foliation (cf. \cite[Theorem A]{asuke_transversely_1998}), and therefore $\mathcal{F}$ supports a bundle-like metric.
\end{proof}
\begin{example}\label{regularIsQsc}
    Suppose $(M, \lambda)$ is a regular contact manifold. This means each point of $M$ has a neighbourhood $U$ such that each flow-line of the Reeb field $R$ intersecting $U$ passes through $U$ a single time. If $M$ is closed, this means all the Reeb orbits are closed \cite{boothby_contact_1958}. It follows from Lemma \ref{orbitsClosuresAreEnvelopingAction} that the enveloping group of $\mathbb{R}$-action associated to the flow of $R$ is $S^1$, hence
    $R$ is a quasiconformal field.
\end{example}

In \cite[Theorem 1.1]{casals_geometric_2019}, the authors show that 
\[
    (\mathbb{R}^3 \times D^{2n-4}, \lambda_{\mathrm{ot}} + \omega_{\mathrm{std}})
\]
is a local model for any overtwisted contact manifold $(M, \alpha)$\footnote{
    Here $\lambda_{\mathrm{ot}} = \cos(\rho)d z + \rho\sin(\rho)d\theta$ is the standard overtwisted contact form on $\mathbb{R}^3$ with cylindrical coordinates $(\rho, \theta, z)$, and $\omega_{\mathrm{std}} = \frac{1}{2}\sum_i(x_i d  y_i - y_i d  x_i)$ is the standard symplectic form on $\mathbb{R}^{2n-4}$ with coordinates $(x_1, y_1, \cdots, x_{n-2}, y_{n-2})$.
}.
More precisely, they show that any overtwisted contact $(M, \alpha)$ manifold admits an contact embedding of $(\mathbb{R}^3 \times D^{2n-4}(r), \lambda)$, where $\lambda = \lambda_{\mathrm{ot}} + \omega_{\mathrm{std}}$, for some $r>0$, i.e. a map
\[
    f: (\mathbb{R}^3 \times D^{2n-4}(r), \lambda) \to (M, \alpha)
\]
such that $f^\ast\ker\alpha = \ker\lambda$.
Such contact embedding is not necessarily a strict one, that is, it does not need to satisfy $f^\ast\alpha = \lambda$.
As an application of Theorem A, we can show that for quasiconformal Reeb flows that is never the case.
\begin{theorem}\label{thm: otd&qsc}
    A contact form whose Reeb flow is quasiconformal does not admit a strict contact embedding of $(\mathbb{R}^3 \times D^{2n-4}, \lambda_{\mathrm{ot}} + \omega_{\mathrm{std}})$.
    In particular, any overtwisted contact structure admits a contact form whose associated Reeb field is not quasiconformal.
\end{theorem}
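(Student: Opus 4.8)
The strategy is a proof by contradiction: transport a piece of the Reeb dynamics of the overtwisted model into $M$ and contradict the metric rigidity granted by Theorem~A. Suppose $(M,\alpha)$ is closed, its Reeb flow $F=\{F^{t}\}$ is quasiconformal, and $f\colon(\mathbb{R}^{3}\times D^{2n-4},\lambda)\to(M,\alpha)$ is a strict contact embedding, i.e.\ $f^{*}\alpha=\lambda$ with $\lambda=\lambda_{\mathrm{ot}}+\omega_{\mathrm{std}}$. Since $f^{*}\alpha=\lambda$, the map $f$ sends the Reeb field $R_{\lambda}$ of $\lambda$ to $R_{\alpha}$; hence, writing $\phi=\{\phi^{t}\}$ for the flow of $R_{\lambda}$, the curve $t\mapsto f(\phi^{t}(p))$ is the $R_{\alpha}$-orbit through $f(p)$, so whenever $\phi^{T}(p)=p$ we get $F^{T}(f(p))=f(p)$ and, $f$ being an embedding, $dF^{T}_{f(p)}=df_{p}\circ d\phi^{T}_{p}\circ(df_{p})^{-1}$. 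On the other hand, by Theorem~A the quasiconformality of $F$ makes $M$ carry an $F$-invariant metric $g$, so each $F^{t}$ is a $g$-isometry.

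Next I would compute $R_{\lambda}$ explicitly. In cylindrical coordinates $(\rho,\theta,z)$ on $\mathbb{R}^{3}$ and coordinates $(x_{1},y_{1},\dots,x_{n-2},y_{n-2})$ on $\mathbb{R}^{2n-4}$, the equations $\lambda(R_{\lambda})=1$, $\iota_{R_{\lambda}}d\lambda=0$ give
\[
   R_{\lambda}=\frac{1}{\rho+\sin\rho\cos\rho}\left(\sin\rho\,\partial_{\theta}+(\sin\rho+\rho\cos\rho)\,\partial_{z}\right)=:b(\rho)\,\partial_{\theta}+c(\rho)\,\partial_{z},
\]
where $\rho+\sin\rho\cos\rho>0$ for $\rho>0$ (its derivative is $1+\cos 2\rho\ge 0$); thus $\phi^{t}(\rho,\theta,z,w)=(\rho,\,\theta+b(\rho)t,\,z+c(\rho)t,\,w)$. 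Let $\rho_0\in(\tfrac{\pi}{2},\pi)$ be the first positive root of $\tan\rho=-\rho$, so $\sin\rho_0=-\rho_0\cos\rho_0$; a short computation then gives $c(\rho_0)=0$, $b(\rho_0)=\frac{1}{\rho_0\sin\rho_0}\ne 0$ and $c'(\rho_0)=\frac{(2+\rho_0^{2})\cos\rho_0}{\rho_0\sin^{2}\rho_0}\ne 0$. Fix $p=(\rho_0,0,0,0)$. Because $c(\rho_0)=0$ and $b(\rho_0)\ne 0$, the $R_{\lambda}$-orbit of $p$ is the circle $\{\rho_0\}\times S^{1}\times\{0\}\times\{0\}$, and there is $T>0$ with $\phi^{T}(p)=p$. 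In the coordinate frame, $d\phi^{T}_{p}$ fixes $\partial_{\theta},\partial_{z},\partial_{x_{i}},\partial_{y_{i}}$ and sends $\partial_{\rho}\mapsto\partial_{\rho}+T\bigl(b'(\rho_0)\partial_{\theta}+c'(\rho_0)\partial_{z}\bigr)$; hence $d\phi^{T}_{p}=\mathrm{id}+TN$ with $N^{2}=0$ and $N\ne 0$ (the $\partial_{z}$-component of $N\partial_{\rho}$ is $c'(\rho_0)\ne 0$), i.e.\ $d\phi^{T}_{p}$ is a non-trivial unipotent. Since the flow of $R_{\lambda}$ preserves $\lambda$, hence $\ker\lambda$, the restriction $d\phi^{T}_{p}\rvert_{\ker\lambda_{p}}$ is again a non-trivial unipotent.

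Now set $y=f(p)$. By the first paragraph, $F^{T}(y)=y$ and $dF^{T}_{y}=df_{p}\circ d\phi^{T}_{p}\circ(df_{p})^{-1}$; and $f^{*}\alpha=\lambda$ forces $df_{p}(\ker\lambda_{p})=\ker\alpha_{y}$, so $dF^{T}_{y}\rvert_{\ker\alpha_{y}}$ is conjugate to the non-trivial unipotent $d\phi^{T}_{p}\rvert_{\ker\lambda_{p}}$. But $F^{T}$ is a $g$-isometry fixing $y$, so $dF^{T}_{y}$ is $g$-orthogonal on $T_{y}M$ and, restricted to the invariant subspace $\ker\alpha_{y}$, is still orthogonal, hence semisimple with all eigenvalues of modulus $1$. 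A semisimple operator all of whose eigenvalues equal $1$ is the identity, contradicting its being conjugate to $\mathrm{id}+TN$ with $N\ne 0$. This proves the first assertion; the ``in particular'' follows by feeding the contact embedding of an overtwisted $(M,\xi)$ supplied by \cite{casals_geometric_2019} into it, after rescaling the ambient contact form (which does not change $\xi$) so as to make that embedding strict on a sub-model $(\mathbb{R}^{3}\times D^{2n-4}(r'),\lambda)$.

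I expect the crux to be the second paragraph: one must exhibit a Reeb orbit of the model that \emph{both} closes up --- in spite of the non-compactness of the $\mathbb{R}^{3}$ factor --- \emph{and} has a non-semisimple linearised return map. This is exactly where the specific overtwisted form enters: the explicit $b,c$ show that the ``slope'' $c(\rho)$ of $R_{\lambda}$ is non-constant but vanishes at $\rho_0$, which simultaneously makes the $\rho_0$-orbit periodic and produces the shear in $d\phi^{T}_{p}$; the delicate points are checking $c'(\rho_0)\ne 0$ and that $d\phi^{T}_{p}$ really is $\ne\mathrm{id}$ on $\ker\lambda_{p}$. (Any root of $c$ would serve equally well: these form a discrete, non-empty set, and the computation above shows that $c'$ is automatically nonzero at such a root.) A secondary point is the rescaling step in the ``in particular'', which is routine but requires some care because the model manifold is non-compact.
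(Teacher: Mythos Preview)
Your proof is correct, and it uses the same raw ingredients as the paper's---the explicit computation of $R_{\lambda}$ and the identification of the cylinder $\{\rho=\rho_0\}$ where the $\partial_z$-component of the Reeb field vanishes---but packages the contradiction differently. The paper argues at the $C^{0}$ level: at $\rho_0$ the Reeb orbits are horizontal circles, while for nearby $\rho$ they are helices whose $z$-coordinate escapes to infinity, so two initially close points drift arbitrarily far apart and the model flow is not equicontinuous; a strict embedding then transports this failure of equicontinuity to $R_\alpha$. You argue at the $C^{1}$ level: the periodic orbit at $\rho_0$ has a non-trivially unipotent linearised return map (the shear $c'(\rho_0)\neq 0$), and under a strict embedding this conjugates to $dF^{T}_{y}$, which the invariant metric supplied by Theorem~A forces to be orthogonal, hence semisimple---contradiction. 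Both are really the same geometric observation (the shear in the Reeb flow at $\rho_0$) read at different orders.

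Your route has one practical advantage worth noting: it uses only a compact neighbourhood of the circle $\{\rho_0\}\times S^{1}\times\{0\}\times\{0\}$, so in the ``in particular'' clause the rescaling of $\alpha$ needs only make the contact embedding strict near that circle, which is a straightforward extension problem from a compact set and sidesteps the non-compactness of the $\mathbb{R}^{3}$ factor that you rightly flagged. The paper's orbit-divergence argument, by contrast, uses the unbounded $z$-direction in an essential way, so that its passage from ``the model flow is not equicontinuous'' to ``$R_\alpha$ is not equicontinuous on the closed manifold $M$'' is a little more delicate than it appears. In that sense your argument is the cleaner of the two.
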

\begin{proof}
    We begin by looking at the particular case of the $3$-dimensional Euclidean space $(\mathbb{R}^3, \lambda_{\mathrm{ot}})$.
    We want to show its Reeb flow is not equicontinuous, and consequently not quasiconformal.
    In order to calculate its Reeb field $R_{\mathrm{ot}} = f\partial_\rho + g\partial_\theta + h\partial_z$, we write
    \begin{align*}
     d\lambda_{\mathrm{ot}} &= -\sin(\rho)d\rho\wedge d z + (\sin(\rho) +\rho\cos(\rho))d\rho\wedge d\theta \\
               &= d\rho\wedge\alpha,
    \end{align*}
    where $\alpha = (\sin(\rho) +\rho\cos(\rho))d\theta - \sin(\rho)d z$. 
    Then
    \begin{align*}
        0   &= d\rho\wedge\alpha(R_{\mathrm{ot}}, \cdot)\\
            &= d\rho(R_{\mathrm{ot}})\alpha - \alpha(R_{\mathrm{ot}})d\rho \\
            &= f[(\sin\rho +\rho\cos\rho)d\theta - \sin\rho d z] - [(\sin\rho +\rho\cos\rho)g - \sin\rho h]d\rho \\
            &= [\sin\rho h-(\sin\rho +\rho\cos\rho)g]d\rho + f(\sin\rho +\rho\cos\rho)d\theta - f\sin\rho d z.
    \end{align*}
    Together with $\lambda_{\mathrm{ot}}(R_{\mathrm{ot}}) = 1$, the above equations give rise to the the system
    \begin{equation*}
        \begin{cases}
            \cos\rho h + \rho\sin\rho g = 1 \\
            \sin\rho h = (\sin\rho +\rho\cos\rho)g \\
            f(\sin\rho +\rho\cos\rho) = 0 \\
            f\sin\rho = 0
        \end{cases}
    \end{equation*}
    which, for $\rho > 0$, has as solutions
    \begin{equation*}
        \begin{cases}
            f = 0 \\
            g = \frac{\sin\rho}{\rho + \sin(\rho)\cos(\rho)} = \frac{2\sin\rho}{2\rho + \sin(2\rho)} \\
            h = \frac{\sin\rho + \rho\cos\rho}{\rho + \sin(\rho)\cos(\rho)} = \frac{2\sin\rho + 2\rho\cos\rho}{2\rho + \sin(2\rho)},
        \end{cases}
    \end{equation*}
    where we made use of the relation $\sin(\rho)\cos(\rho) = \frac{\sin(2\rho)}{2}$. 
    Thus 
    \begin{equation*}
        R_{\mathrm{ot}} = \frac{2}{2\rho + \sin(2\rho))}\left(\sin(\rho)\partial_\theta + (\sin\rho + \rho\cos\rho)\partial_z\right).
    \end{equation*}
    Note that the field does not depend on $\theta$ or $z$, nor does it have any radial components. 
    This means each cylinder $\{\rho = \rho_0\}$ is invariant under the flow of $R_{\mathrm{ot}}$ and foliated by its flowlines, which are helices in $\mathbb{R}^3$.
    In particular, consider a value $\rho_0 > 0$ for which 
    \begin{equation*}
        \rho_0 + \tan(\rho_0) = 0.
    \end{equation*}
    Such value is an isolated zero of the function 
    \begin{equation*}
        \frac{2(\sin\rho + \rho\cos\rho)}{2\rho + \sin(2\rho))},
    \end{equation*}
    therefore the $\partial_z$ component of $R_{\mathrm{ot}}$ vanishes at the cylinder $C$ defined by $\rho = \rho_0$. 
    The restriction of the Reeb flow $\exp(tR_{\mathrm{ot}})$ to $C$ is a closed foliation by circles parallel to the $\rho\theta$-plane. 
    In particular, the $z$-component is constant along each flowline.
    \par Fix a point $x_0 = (\rho_0, \theta_0, z_0) \in C$. 
    Given $\epsilon > 0$, in any $\delta$-ball centred in $x_0$ we can find a point $y = (\rho_y, \theta_0, z_0)$ lying in a cylinder where the $\partial_z$ component of $R_{\mathrm{ot}}$ \emph{does not} vanish.
    This means the orbit of $y$ under the flow of $R_{\mathrm{ot}}$ is a helix, and therefore the $z$ component of $\exp(tR_{\mathrm{ot}})y$ grows in module as $t$ goes to infinity. 
    The $z$-component of $\exp(tR_{\mathrm{ot}})x_0$, on the other hand, is constant.
    Hence the distance between $\exp(tR_{\mathrm{ot}})y$ and $\exp(tR_{\mathrm{ot}})x_0$ will be greater than $\epsilon$ for large enough $t$, and the Reeb flow is not equicontinuous.     
    \par Next, in the general case, we consider the model
    \begin{equation*}
        (\mathbb{R}^3 \times D^{2n-4}, \lambda), 
    \end{equation*}
    noting that its Reeb field $R$ is simply $R_{\mathrm{ot}}+\vec{0}$, which is not equicontinuous by the same arguments as above.
   Now, given an arbitrary overtwisted contact manifold $(M, \alpha)$ whose Reeb field $R_\alpha$ is quasiconformal, if the embedding $f: (\mathbb{R}^3 \times D^{2n-4}(r), \lambda) \to (M, \alpha)$ were strict, the relation $f^\ast\alpha = \lambda$ would imply that $R_\alpha$ is not equicontinuous, a contradiction.
\end{proof}
Recalling the dicotomy overtwisted/tight in contact geometry, Theorem \ref{thm: otd&qsc} raises the question: is it true that every quasiconformal Reeb field is associated to a tight contact structure?
The converse affirmation is not true.
Indeed, the standard contact form on the $3$-torus is tight (even more, it is \emph{strongly fillable} \cite{eliashberg_unique_1996}) but it is not equicontinuous, and therefore not quasiconformal. 
Similar constructions hold in higher dimensions \cite{massot_weak_2013}.

\section{Counting closed orbits}\label{morseTheory}
In this section, we use Morse Theory to relate the number of closed leaves in an isometric contact foliation to its basic cohomology.
To this end, we begin by constructing a suitable Morse-Bott function on $M$, whose critical set is the collection of closed leaves.
Our arguments follow the same ideas from \cite{banyaga_note_1990, goertsches_topology_2020}, but without any condition on the metric $g$ other than its invariance under the contact action $F$.
\par Suppose $\mathcal{F}$ is a $C^1$-equicontinuous foliation on $M$, and let $g$ be an invariant metric.
Since each Reeb field is Killing, we can define the following:
\begin{definition}[Associate tensor field]\label{associatedTensorField} 
    For each Reeb field $R_i$ of an isometric contact foliation $\mathcal{F}$, its \emph{associate tensor field} is the $(1,1)$-tensor
    \begin{equation}
        f_i: X \mapsto \nabla_XR_i.
    \end{equation} 

\end{definition}
Using Koszul's formula, we see that these tensors satisfy
\begin{equation}\label{KosRev}
    g(f_i(X), Y) = \frac{1}{2}d\lambda_i(X,Y).
\end{equation}

\begin{lemma} For every $i$, the associated tensor field $f_i$ is such that
    \begin{itemize}
        \item[(i)] $f_i$ is skew-symmetric with respect to $g$;
        \item[(ii)] $f_i(R_j) = 0$ for every $j$;
        \item[(iii)] $\mathrm{Im}f_i \cap \mathcal{R} = \{0\}$;
        \item[(iv)]\label{KosFor} (Kostant's Formula) $\nabla_Xf_i = \mathrm{R}(X,R_i)$, where $\mathrm{R}$ is the curvature tensor.
    \end{itemize}
\end{lemma}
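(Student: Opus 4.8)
The plan is to dispatch the four items in turn, using only the Killing property of $R_i$, equation \eqref{KosRev}, and the defining axioms of the $q$-contact structure. Item (i) is immediate: $R_i$ being Killing means $\mathcal{L}_{R_i}g = 0$, which in covariant form reads $g(\nabla_X R_i, Y) + g(X, \nabla_Y R_i) = 0$ for all $X,Y$, and this is exactly the skew-symmetry $g(f_i X, Y) = -g(X, f_i Y)$.

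For (ii) I would read off from \eqref{KosRev} that $g(f_i(R_j), Y) = \frac{1}{2}\, d\lambda_i(R_j, Y)$ for every $Y$. Since condition (iii) of Definition \ref{qcontactstructure} gives $\ker d\lambda_i = \mathcal{R} = \mathrm{Span}\{R_1,\cdots,R_q\}$, we have $\iota_{R_j}d\lambda_i = 0$, whence $g(f_i(R_j), Y) = 0$ for all $Y$, and non-degeneracy of $g$ forces $f_i(R_j)=0$. Item (iii) then follows by combining (i) and (ii): for any $X$ and any $j$ we get $g(f_i X, R_j) = -g(X, f_i R_j) = 0$, so $\mathrm{Im}\,f_i \subseteq \mathcal{R}^{\perp}$; since $g$ is positive-definite, $\mathcal{R}\cap\mathcal{R}^{\perp} = \{0\}$, hence $\mathrm{Im}\,f_i \cap \mathcal{R} = \{0\}$.

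Item (iv) is Kostant's formula for a Killing field, and it is the only part requiring a genuine computation. Writing $\nabla^2_{X,Y}R_i := \nabla_X(\nabla_Y R_i) - \nabla_{\nabla_X Y}R_i = (\nabla_X f_i)(Y)$ and setting $T(X,Y,Z) := g(\nabla^2_{X,Y}R_i, Z)$, I would use three ingredients: the Ricci identity $\nabla^2_{X,Y}R_i - \nabla^2_{Y,X}R_i = \mathrm{R}(X,Y)R_i$; the antisymmetry of $T$ in its last two slots, obtained by covariantly differentiating the Killing relation of (i); and the first Bianchi identity together with the pair symmetry of $\mathrm{R}$. The standard cyclic manipulation of the first two facts gives
\[
    2T(X,Y,Z) = g(\mathrm{R}(X,Y)R_i, Z) + g(\mathrm{R}(Z,Y)R_i, X) - g(\mathrm{R}(X,Z)R_i, Y),
\]
and feeding in the first Bianchi identity and the curvature symmetries collapses the right-hand side to $T(X,Y,Z) = g(\mathrm{R}(X,R_i)Y, Z)$; since $Z$ is arbitrary and $g$ is non-degenerate, this is precisely $\nabla_X f_i = \mathrm{R}(X, R_i)$. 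The only real obstacle is bookkeeping with the sign and argument-ordering conventions for $\mathrm{R}$: with the convention $\mathrm{R}(X,Y) = [\nabla_X,\nabla_Y] - \nabla_{[X,Y]}$ the identity comes out exactly as stated, and one must take care that the curvature symmetries are applied consistently throughout the cyclic argument.
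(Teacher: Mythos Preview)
Your arguments for items (i)--(iii) are correct and coincide with the paper's, which simply remarks that these are immediate consequences of Equation~\eqref{KosRev} (indeed, skew-symmetry of $f_i$ is equivalent to antisymmetry of $d\lambda_i$ via that relation, and your Killing-equation derivation of (i) is an equally valid unpacking of the same fact).

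For item (iv) your proof is correct but takes a genuinely different route from the paper's. You run the classical derivation of Kostant's formula for an arbitrary Killing field: set $T(X,Y,Z)=g(\nabla^2_{X,Y}R_i,Z)$, combine the Ricci identity with the antisymmetry of $T$ in its last two arguments, and then use the first Bianchi identity and pair symmetry of the curvature tensor to collapse the cyclic sum. The paper instead exploits the fact that the flow of $R_i$ consists of isometries and therefore preserves the Levi--Civita connection, i.e.\ $\mathcal{L}_{R_i}\nabla=0$; expanding $(\mathcal{L}_{R_i}\nabla)_XY=0$ and rewriting Lie brackets via torsion-freeness yields $\mathrm{R}(R_i,X)Y+(\nabla_Xf_i)Y=0$ directly. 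The paper's argument is shorter and avoids the Bianchi/cyclic bookkeeping, but it requires invoking (or believing) the invariance of $\nabla$ under isometries. Your argument is more self-contained at the level of tensor identities and makes the role of the curvature symmetries explicit; it is also the version one finds in most Riemannian geometry references. Either way the conclusion is the same, and your caution about sign conventions for $\mathrm{R}$ is well placed.
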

\begin{proof} The first three items are immediate consequences of Equation \ref{KosRev}. As for (iv), note that the tensor field $\mathcal{L}_{R_i}\nabla$ is zero since the flow of $R_i$ consists of isometries, which preserve the connection. Hence
\begin{align*}
    0 &= (\mathcal{L}_{R_i}\nabla)_XY \\
    &= \mathcal{L}_{R_i}(\nabla_XY) - \nabla_{\mathcal{L}_{R_i}X}Y - \nabla_X(\mathcal{L}_{R_i}Y) \\
    &= [R_i, \nabla_XY] - \nabla_{[R_i, X]}Y - \nabla_X[R_i, Y] \\
    &= \nabla_{R_i}\nabla_XY - \nabla_{\nabla_XY}R_i - \nabla_{[R_i, X]}Y - \nabla_X\nabla_{R_i}Y + \nabla_X\nabla_YR_i \\
    &= \nabla_{R_i}\nabla_XY - \nabla_{[R_i, X]}Y - \nabla_X\nabla_{R_i}Y + \nabla_Xf_i(Y) - f_i(\nabla_XY) \\
    &= \mathrm{R}(X,R_i)Y - (\nabla_Xf_i)Y.
\end{align*}
\end{proof}

We consider the Lie algebra $\mathfrak{e}$ of the enveloping group
\[
    E = \mathrm{E}^1_F = \overline{F(\mathbb{R}^q)} =  \overline{\mathrm{Span}\{\exp(tR_i); t \in \mathbb{R}, 1 \leq i \leq q\}}.
\]
Due to $C^1$-equicontinuity, it follows that $E$ is a torus and $\mathfrak{e}$ is a subalgebra of the family of Killing fields $\mathfrak{iso}(M)$ containing $\Gamma(\mathcal{R})$, the space of fields tangent to the Reeb distribution $\mathcal{R}$. 
\par For each point $p \in M$, the isotropy subalgebra $\mathfrak{e}_p$ is defined as the set of elements of $\mathfrak{e}$ whose value at $p$ is zero, i.e., the fields in $\mathfrak{e}$ whose flow fixes $p$.
This is exactly the Lie subalgebra associated with the isotropy subgroup $E_p \subset E$ of $p$.
Recall the type of the orbit $E(p) = \overline{\mathcal{F}(p)}$ is the conjugacy class of its isotropy group $E_p$.
Now, $E$ is a torus (hence an Abelian group) and therefore each such class has exactly one element. 
In other words, there is exactly one isotropy subgroup in $E$ for each orbit type.
On the other hand, since $M$ is compact, it follows from the Slice Theorem that there are only finitely many orbit types (see Theorem I.2.1 and Proposition I.2.4 in \cite{audin2012torus}, for instance).
Consequently, there are only finitely many distinct subalgebras $\mathfrak{e}_p$. 
\par Note that $\mathfrak{e}_p$ has dimension at most $\dim E - q$ since the Reeb fields never belong to it.
We want to choose a vector field $Z \in \mathfrak{e}$ avoiding subspaces of non-maximal dimension, which is a generic choice in the following sense. 
We write
\[
    \widetilde{\mathfrak{e}}_p = \mathfrak{e}_p \oplus \Gamma(\mathcal{R}) \subset \mathfrak{e},
\]
and let $\mathfrak{b}$ be the (\textit{finite}) union of all ``bad'' subspaces $\widetilde{\mathfrak{e}}_p \neq \mathfrak{e}$, that is, those whose dimension is non-maximal.
We choose a Killing field $Z$ to satisfy
\begin{equation}\label{eq:condZ}
        Z \in \mathfrak{e}\setminus\mathfrak{b}.
\end{equation}
This subset is nonempty because the orbits of minimal dimension are associated to the isotropy subgroups (and equivalently, subalgebras) of maximal dimension (cf. \cite[Section I.2]{audin2012torus} and \cite[Section I.4]{bredon1972introduction}).
The foliation $\mathcal{F}$ has at least two closed leaves \cite{finamore_contact_2022}, which are minimal orbits under the action of $E$.
For example, if all the leaves of $\mathcal{F}$ are closed, then $\dim E = q$ and therefore $\mathfrak{b}$ is empty.
In this situation, condition (\ref{eq:condZ}) above is trivially satisfied for any field.
\par Since $\mathcal{L}_Z\lambda_i = 0$ for any $i$, we have $d(\iota_Z\lambda_i) = -\iota_Zd\lambda_i$.
Thus the critical point set $C$ of the map
\begin{align*}
    S: M &\to \mathbb{R}\\
    p &\mapsto \iota_Z\lambda_i(p)
\end{align*}
 is exactly the set of points where $d\lambda_i(Z_p, \cdot) \equiv 0$, that is, the set
 \begin{equation}\label{eq:firstCChar}
    C = \{p \in M; Z_p \in \mathcal{R}_p\}.
 \end{equation}
Remark that $Z$ is an $\mathcal{F}$-foliated field, i.e., its flow preserves the foliation $\mathcal{F}$. 
In particular, if $Z$ is tangent to $\mathcal{R}$ at a point $p$, then it must be tangent to $\mathcal{R}$ at every point of $\mathcal{F}(p)$, and $\mathcal{F}{p}$ is invariant under the flow of $Z$ (see the proof \cite[Theorem 3.19]{finamore_contact_2022}, for instance).
This means $C$ is a $\mathcal{F}$-saturated subset of $M$.
\par Now, in light of our choice of $Z$, the dimension of the orbit $E(p)$ of a point $p \in C$ can not be greater than $q$.
Together with the equality on (\ref{eq:firstCChar}), this allows us to further characterise $C$ as
\begin{align*}
    C &= \{p \in M; \dim E(p) = q\} \\
      &= \{p \in M; \mathcal{F}(p) \text{ is closed}\}.
 \end{align*}
Thus, $C$ is precisely the union of the closed leaves of the contact foliation $\mathcal{F}$. 
Moreover, $C$ is the union of fixed points of the subtori $T \subset E$ whose dimension is $\dim E - q$, and therefore a submanifold of $M$.

\begin{remark}\label{rem:notation}
    As shown in \cite[Theorem 3.23]{finamore_contact_2022}, the set $C$ has a decomposition $C = \cup_iN_i$, where each $N_i$ is a totally geodesic submanifold of $(M,g)$ of even codimension. 
    Each such $N_i$ is saturated under $\mathcal{F}$, and can therefore been seen as a foliated manifold $(N_i, \mathcal{F}\rvert_{N_i})$, where $\mathcal{F}\rvert_{N_i}$ is simply the collection of closed leaves passing through $N_i$.
    Clearly, we have $\mathcal{R} \subset TN_i$ for each such submanifold.
    Our choice of $Z$ in a maximal isotropy subalgebra of $\mathfrak{e}$ ensures that, given a point $p\in N_i$, there is a neighbourhood of $p$ in $M$ where all the zeros of $Z$ belong to $N_i$. 
\end{remark}

\begin{lemma}
    Let $N$ be a connected component of $C$ and $p \in N$. Consider the Killing field
    \[
        K = Z - \sum_{i=1}^q\lambda_i(Z)R_i,
    \]
    and the tensor field
    \[
        \Phi = \sum_{j=1}^q(\iota_Z\lambda_j)f_j,
    \]
    where the $f_i$ are the associated tensor fields of Definition \ref{associatedTensorField}.
    Then for all $v, w \in T_pM$ perpendicular to $N$ we have:
    \begin{itemize}
        \item[(i)] $\nabla_vK = \nabla_vZ - \Phi(v)$ is a non-zero vector perpendicular to $N$. In particular, $\nabla_vK \in \xi_p$.
        \item[(ii)] $\mathrm{Hess}(S)\rvert_p(v,w) = 2g(\mathrm{R}(R_i,v)Z_p + f_i(\nabla_vZ), w).$
        \item[(iii)] The Hessian of $S$ along $N$ is non-degenerate in directions orthogonal to $N$.
    \end{itemize}
\end{lemma}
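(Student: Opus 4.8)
The plan is to unpack the three claims in order, since (i) feeds directly into (ii), which feeds into (iii). Throughout I fix a connected component $N$ of $C$ and a point $p \in N$, and I work with vectors $v, w \in T_pM$ perpendicular to $N$. Because $\mathcal{R} \subset TN$ (Remark \ref{rem:notation}), such $v$ are automatically transverse to $\mathcal{R}$; and since $Z_p \in \mathcal{R}_p$ at points of $C$, we have $K_p = Z_p - \sum_i\lambda_i(Z)R_i\rvert_p = 0$, so $p$ is a zero of $K$. This is the key simplification: $S(p) = \iota_Z\lambda_i(p)$ and its derivatives at $p$ can be computed using that $K$ vanishes at $p$, while $Z = K + \sum_j(\iota_Z\lambda_j)R_j$.

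\textbf{Step 1 (item (i)).} First I would compute $\nabla_vZ$ by differentiating the relation $Z = K + \sum_j(\iota_Z\lambda_j)R_j$ and using $K_p = 0$, together with $\nabla_XR_j = f_j(X)$ (Definition \ref{associatedTensorField}): this gives $\nabla_vK = \nabla_vZ - \sum_j (\iota_Z\lambda_j)\,f_j(v) = \nabla_vZ - \Phi(v)$, where the derivative terms $v(\iota_Z\lambda_j)\cdot R_j$ drop out because $\nabla_vK$ should be compared modulo $\mathcal{R}$ — more carefully, one checks that the component of $\nabla_vK$ along $\mathcal{R}$ vanishes since $K$ is Killing and vanishes at $p$ (a Killing field vanishing at $p$ has $\nabla K\rvert_p$ skew-symmetric, and one checks $g(\nabla_vK, R_j) = -g(v, \nabla_{R_j}K) = 0$ because $K$ is tangent to the closed leaf through $p$ along $N$ and $\nabla_{R_j}K$ is then tangent to $N$). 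Hence $\nabla_vK \in \xi_p$, in fact perpendicular to $N$ since $\nabla K\rvert_p$ maps $T_pN$ to $T_pN$ (as $N$ is a component of the fixed-point set of the torus generated by $K$, hence totally geodesic and $K$-invariant). Non-vanishing of $\nabla_vK$ for $v \neq 0$ follows from our genericity choice of $Z$: near $p$ the only zeros of $Z$ (equivalently of $K$) lie in $N$, so $K$ does not vanish to higher order transverse to $N$; equivalently, the linearisation $\nabla K\rvert_p$ is injective on the normal space $(T_pN)^\perp$.

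\textbf{Step 2 (item (ii)).} Since $p$ is critical for $S$, the Hessian is well-defined and $\mathrm{Hess}(S)\rvert_p(v,w) = v(w(S))$ computed with any extension. Using $S = \iota_Z\lambda_i$ and $d(\iota_Z\lambda_i) = -\iota_Zd\lambda_i$ (from $\mathcal{L}_Z\lambda_i = 0$), together with $d\lambda_i(X,Y) = 2g(f_i(X),Y)$ (Equation \ref{KosRev}), I would write $dS = -2g(f_i(Z),\cdot)$ and differentiate covariantly: $\mathrm{Hess}(S)\rvert_p(v,w) = -2\,g\big((\nabla_vf_i)(Z_p) + f_i(\nabla_vZ),\,w\big)$. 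Now Kostant's formula $\nabla_vf_i = \mathrm{R}(v,R_i)$ turns the first term into $-2g(\mathrm{R}(v,R_i)Z_p, w) = 2g(\mathrm{R}(R_i,v)Z_p, w)$, and a sign bookkeeping on the second term (using skew-symmetry of $f_i$ to move it onto $w$, or keeping it as written) yields exactly $2g(\mathrm{R}(R_i,v)Z_p + f_i(\nabla_vZ), w)$. The main thing to be careful about here is the choice of extension of $v$ and the fact that at a critical point the Hessian is independent of it, so one may extend $v$ by a vector field commuting with the relevant structure.

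\textbf{Step 3 (item (iii)).} Non-degeneracy in normal directions is where the real content lies, and I expect this to be the main obstacle. The idea is to rewrite the Hessian form using items (i) and (ii): one shows $\mathrm{Hess}(S)\rvert_p(v,w) = c\, g(\nabla_vK, \nabla_wK)$ for a nonzero constant $c$ (morally $c = -2$, up to normalisation), by expanding $\mathrm{R}(R_i,v)Z_p$ via Kostant's formula applied a second time and recognising $\nabla_vZ - \Phi(v) = \nabla_vK$. Concretely, $\nabla_vZ = \nabla_vK + \Phi(v)$ and $\Phi(v) = \sum_j(\iota_Z\lambda_j)f_j(v)$; since $Z_p \in \mathcal{R}_p$ one has $Z_p = \sum_j\lambda_j(Z)R_j\rvert_p$, so $\mathrm{R}(R_i,v)Z_p$ and the curvature terms reorganise — using $\nabla_v f_j = \mathrm{R}(v,R_j)$ once more — into something proportional to $f_i(\nabla_vK)$ plus lower-order pieces, and one arrives at a symmetric expression in $\nabla_vK$ and $\nabla_wK$. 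Because $\nabla_\bullet K$ is injective on the normal space to $N$ by item (i), and $g$ is positive definite, the resulting bilinear form $g(\nabla_vK,\nabla_wK)$ is positive definite on $(T_pN)^\perp$; hence $\mathrm{Hess}(S)$ is (definite, in particular) non-degenerate there. The delicate point is the algebraic identity reducing the curvature term to $f_i(\nabla_vK)$: this is precisely the step where one uses that the Reeb fields are Killing (so Kostant's formula is available) and nothing about any compatible $f$-structure, matching the philosophy advertised in the introduction. I would prove this identity by contracting against an arbitrary $w\perp N$, using skew-symmetry of each $f_j$, the first Bianchi identity, and the relation $g(\mathrm{R}(v,R_i)Z_p,w) = g(\mathrm{R}(Z_p, w)v, R_i)$ from the symmetries of $\mathrm{R}$, together with $Z_p = \sum_j\lambda_j(Z)R_j\rvert_p$.
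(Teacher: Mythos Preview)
Your Steps 1 and 2 are essentially on the right track and close to the paper's approach. Two small remarks: in Step 1, the argument that the terms $v(\iota_Z\lambda_j)R_j$ drop out is cleaner if you simply note that $p$ is a critical point of each function $\iota_Z\lambda_j$ (same critical set $C$), so $v(\iota_Z\lambda_j)=0$ directly; your indirect route via $g(\nabla_vK,R_j)=0$ shows $\nabla_vK\in\xi$ but does not by itself yield the \emph{identity} $\nabla_vK=\nabla_vZ-\Phi(v)$. For non-vanishing of $\nabla_vK$, the inference ``zeros of $K$ near $p$ lie in $N$, hence $K$ does not vanish to higher order transversely'' is not a general fact --- it is the Killing property that makes it work: $K$ is a Jacobi field along the geodesic through $p$ in direction $v$, and a Jacobi field with $K_p=0$ and $\nabla_vK=0$ vanishes identically along that geodesic, forcing it into $N$.

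Step 3 contains a genuine error. Carrying your own computation through, one finds (using $Z_p=\sum_j\lambda_j(Z)R_j$, the identity $\mathrm{R}(v,R_i)R_j=-f_if_j(v)$ from Kostant, and $\nabla_vZ=\nabla_vK+\Phi(v)$) that the bracket in item~(ii) simplifies exactly to $f_i(\nabla_vK)$, so
\[
\mathrm{Hess}(S)\rvert_p(v,w)\;=\;-2\,g\big(f_i(\nabla_vK),\,w\big).
\]
This is \emph{not} of the form $c\,g(\nabla_vK,\nabla_wK)$, and it cannot be: that would force $\mathrm{Hess}(S)$ to be definite on every normal space, i.e.\ every component of $C$ would be a local extremum of $S$, contradicting the existence of components of intermediate index (which the paper uses later via the Poincar\'e polynomial identity $P_{\mathcal{F}}(t)=\sum_N t^{i_N}P_{\mathcal{F}\rvert_N}(t)$). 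The correct conclusion of the algebra you outline is non-degeneracy, not definiteness: for each nonzero $v\perp N$ one exhibits the specific partner $w=f_i(\nabla_vK)$ and computes
\[
\mathrm{Hess}(S)\rvert_p\big(v,\,f_i(\nabla_vK)\big)\;=\;-2\,\|f_i(\nabla_vK)\|^2\;\neq\;0,
\]
the last inequality because $\nabla_vK\in\xi_p\setminus\{0\}$ by item~(i) and $f_i\rvert_\xi$ is injective (non-degeneracy of $d\lambda_i$ on $\xi$). This is exactly how the paper proceeds. So your curvature manipulation is the right one; the mistake is in the final interpretation of the resulting bilinear form.
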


\begin{proof}
We remark that $\mathcal{R} \subset TN$, and since $\mathcal{R}$ and $\xi$ are orthogonal as per choice of $g$, it follows that $T^\perp N \subset \xi$.
To prove (i), first note that the Lie algebra of $E$ has a decomposition $\mathfrak{e} = \mathfrak{e}_p \oplus \Gamma(\mathcal{R})$, and $Z = K + \sum_i\lambda_i(Z)R_i$ is simply the corresponding decomposition of $Z$.
Evaluating the connection in the direction of $v$ yields
\begin{align}\label{EqDelVK1}
    \nabla_vZ &= \nabla_vK + \nabla_v\sum_{j=1}^q\lambda_j(Z)R_j \notag \\ 
    &= \nabla_vK + \sum_{j=1}^q(v\lambda_j(Z) R_j +  \lambda_j(Z)f_j(v)) \notag \\
    &= \nabla_vK + \Phi(v) + \sum_{j=1}^q(v\lambda_j(Z))R_j.
\end{align}
    Now notice that on $N$ the vector $Z$ is tangent to $\mathcal{R}$, and since $v \in \xi$, it follows that $\lambda_j([v, Z]) = 0$ for every $j$. 
    Hence
    \[
        v\lambda_j(Z) = d\lambda_j(v,Z) + Z\lambda_j(v) + \lambda_j([v, Z])= d\lambda_j(v,Z) = 0,
    \]
    and therefore Equation \ref{EqDelVK1} becomes $\nabla_vK = \nabla_vZ - \Phi(v)$, as wanted.

    We claim $\nabla_vK \neq 0$. To see this, suppose by contradiction that $\nabla_vK = 0$ and consider a geodesic $\gamma$ starting at $p$ with velocity vector $v$. Since $K$ is Killing, it is a Jacobi field for $\gamma$, and since $K_p = 0$, it follows that $K\rvert_\gamma = 0$. On the other hand, our choice of $Z$ implies that, in a neighbourhood of $N$, the only zeros of $Z$ are those in $N$. Thus $\gamma \subset N$, which contradicts the orthogonality of $v$ to $N$. Therefore $\nabla_vK \neq 0$.

    Now, to see that $\nabla_vK$ is orthogonal to $N$, we note that $N$, being the zero set of a Killing field, is a totally geodesic submanifold. Together with the facts that $K$ is Killing and $K\rvert_N$ is tangent to $N$, this implies $\nabla_XK$ tangent to $N$ for any $X$ tangent to $N$. Thus
    \[
        g(\nabla_vK, X) = - g(\nabla_XK,v) = 0,
    \]
    and since $X$ was arbitrarily chosen, it follows that $\nabla_vK \in T^\perp N \subset \xi.$

    As for (ii), we consider fields $V$ and $W$ extending $v$ and $w$. Suppose these fields are obtained employing parallel transporting $v$ and $w$ along the geodesics starting at $p$. Observe that $R_i$ and $Z$ are commuting Killing fields, hence 
    \[
        \nabla_{R_i}Z = f_i(Z) + [R_i, Z] = f_i(Z).
    \]
    Using the relation above, the properties of our choice of $Z$, and Lemma \ref{KosFor}(iv), we obtain
    \begin{align*}
    \mathrm{Hess}(S)\rvert_p(v,w) &= V(WS(p)) \\
                                  &= V(W\lambda_i\rvert_p(Z)) \\
                                  &= V(Wg_p(R_i, Z) \\
                                  &= V(g_p(f_i(W), Z) + g_p(R_i, \nabla_WZ)) \\
                                  &= V(-g_p(f_i(Z), W) - g_p(f_i(Z), W) \\
                                  &= -2Vg_p(f_i(Z), W) \\
                                  &= -2(g_p(\nabla_Vf_i(Z), W) + g_p(f_i(Z), \nabla_VW))\\
                                  &= -2( g_p((\nabla_Vf_i)(Z) + f_i(\nabla_VZ), W) + \frac{1}{2}d\lambda_i(Z, \nabla_VW)\\
                                  &= -2g(\mathrm{R}(v, R_i)Z + f_i(\nabla_vZ), w).
    \end{align*}
    Finally, to see that $\mathrm{Hess}(S)\rvert_p$ is non-degenerate in the normal bundle of $N$, we use Kostant's Formula and observe that
    \[
        \mathrm{R}(X, R_i)R_j = (\nabla_Xf_i)R_j = \nabla_X(f_i(R_j)) - f_i(\nabla_XR_j) = -f_if_j(X).
    \]
    Now we use item (ii) with $w = f_i(\nabla_vK)$, yielding:
    \begin{align*}    
        \mathrm{Hess}(S)\rvert_p(v,&f_i(\nabla_vK)) = -2g(\mathrm{R}(v, R_i)Z + f_i(\nabla_vZ), f_i(\nabla_vK)) \\
                                                   &= -2g(R(v, R_i)Z, f_i(\nabla_vK)) + g(+ f_i(\nabla_vZ), f_i(\nabla_vK)) \\
                                                   &= -2g\left(\sum_{j=1}^q\iota_Z\lambda_j\mathrm{R}(v, R_i)R_j + f_i(\nabla_vZ), f_i(\nabla_vK)\right) \\
                                                   &= -2g\left(-\sum_{j=1}^q\iota_Z\lambda_jf_if_j(v) + f_i(\Phi(v) + \nabla_vK), f_i(\nabla_vK)\right) \\
                                                   &= -2g\left(-f_i(\Phi(v)) + f_i(\Phi(v)) + f_i(\nabla_vK), f_i(\nabla_vK)\right) \\
                                                   &= -2\|(f_i(\nabla_vK), f_i(\nabla_vK)\|
    \end{align*}
    Now note that $f_i(\nabla_vK) \neq 0$. In fact, suppose by contradiction that $f_i(\nabla_vK) = 0$. Then, for every $X \in \xi$:
    \[
        0 = g(X, f_i(\nabla_vK)) = - g(f_i(X), \nabla_vK) = -\frac{1}{2}d\lambda_i(X, \nabla_vK),
    \]
    which can not happen since $d\lambda_i$ is non-degenerate on $\xi.$
    This means for each $v$ normal to $N$ there is another vector $f_i(\nabla_vK)$ such that $\mathrm{Hess}(S)\rvert_p(v,f_i(\nabla_vK)) \neq 0$, that is, $\mathrm{Hess}(S)\rvert_p$ is non-degenerate in normal directions.
\end{proof}

Recall that the Betti number $b_i(M;\mathbb{R})$ is the dimension of the $i$-th cohomology group $H^i_{dR}(M)$ over $\mathbb{R}$. Similarly, we define the $\mathcal{F}$-basic Betti number $b_i(\mathcal{F})$ to be the real dimension of the basic cohomology group $H_B^i(\mathcal{F})$. The $\mathcal{F}$-basic Poincaré polynomial of the foliated manifold $(M, \mathcal{F})$ is 
\[
    P_{\mathcal{F}}(t) = \sum_{i = 0}^{\mathrm{codim}(\mathcal{F})}t^ib_i(\mathcal{F}).
\]
\par the fact that $S: M \to \mathbb{R}$ is a $\mathcal{F}$-basic Morse-Bott function allow us to relate the $\mathcal{F}$-basic Poincaré polynomials of the foliated manifolds $(M, \mathcal{F})$ and $(N, \mathcal{F}\rvert_N)$ (see Remark \ref{rem:notation}).
In order to do that, we apply the results from \cite{goertsches_equivariant_2018} to each connected component $N$ of the critical set $C$, getting:
\begin{theorem} The $\mathcal{F}$-basic Poincaré polynomials for $M$ and $N$ satisfy
    \[
        P_{\mathcal{F}}(t) = \sum_Nt^{i_N}P_{\mathcal{F}\rvert_N}(t),
    \]
    where $N$ runs over all the connected components of $C$, and $i_N$ is the index of $N$, i.e., the rank of the negative normal bundle of $N$ with respect to $S$.
\end{theorem}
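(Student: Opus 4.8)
The plan is to verify that $S$ meets the hypotheses of the basic Morse--Bott theory of \cite{goertsches_equivariant_2018} and then to establish that it is a \emph{perfect} basic Morse--Bott function, so that the resulting inequalities collapse to the stated identity. First I would record that $S$ is genuinely $\mathcal{F}$-basic: since $Z$ and every $R_j$ lie in the abelian algebra $\mathfrak{e}$ they commute, and $\mathcal{L}_{R_j}\lambda_i = 0$, so $R_jS = (\mathcal{L}_{R_j}\lambda_i)(Z) + \lambda_i([R_j,Z]) = 0$ and $\mathcal{L}_{R_j}dS = d(R_jS) = 0$; thus $dS$ is a basic form, $S$ is constant along the leaves of $\mathcal{F}$, and in particular it is constant along each component $N$ of $C$, which is a critical manifold. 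By Remark \ref{rem:notation} and the preceding Lemma, $C = \sqcup_N N$ with each $N$ a closed, totally geodesic, $\mathcal{F}$-saturated submanifold containing $\mathcal{R}$, and $\mathrm{Hess}(S)$ is non-degenerate on $T^\perp N \subset \xi$; since $Z$ is $\mathcal{F}$-foliated, the normal bundle of each $N$ is a foliated bundle, so $(N, \mathcal{F}\rvert_N)$ is a genuine foliated submanifold and it is meaningful to compare $P_{\mathcal{F}}$ with $P_{\mathcal{F}\rvert_N}$.

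Next I would analyse the index bundle. Each component $N$ is a connected component of the fixed set of a subtorus $T_N \subset E$ of dimension $\dim E - q$, and the Killing field $K = Z - \sum_i \lambda_i(Z)R_i$ of the preceding Lemma --- whose coefficients $\lambda_i(Z)$ are constant along $N$, so that $K\rvert_N$ equals a constant-coefficient element of $\mathfrak{e}$ vanishing on $N$ --- belongs to $\mathrm{Lie}(T_N)$ and vanishes precisely on $N$. The infinitesimal action of $\mathrm{Lie}(T_N)$ on the normal bundle $\nu_N = T^\perp N \subset \xi$ is $v \mapsto \nabla_v K$, which is skew-symmetric with respect to $g$ and, by the preceding Lemma, has no kernel on $\nu_N$; therefore $\nu_N$ splits $\mathcal{F}\rvert_N$-equivariantly into an orthogonal sum of rank-two weight subbundles rotated by the flow of $K$, i.e. $\nu_N$ carries a natural complex structure, and on each such complex line $S$ coincides, up to a nonzero constant, with $\pm\lvert\cdot\rvert^2$. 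Consequently the negative normal bundle $\nu_N^-$ is a complex --- hence orientable --- subbundle of even real rank $i_N$, which recovers the evenness of the codimension noted in Remark \ref{rem:notation}.

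Finally, applying the basic Morse--Bott inequalities of \cite{goertsches_equivariant_2018} to $S$ yields
\[
    \sum_N t^{i_N}P_{\mathcal{F}\rvert_N}(t) - P_{\mathcal{F}}(t) = (1+t)\,Q(t)
\]
for a polynomial $Q$ with non-negative coefficients, and the remaining --- and most delicate --- step is to prove $Q \equiv 0$. I would argue this along the lines of the Atiyah--Bott/Kirwan proof of the perfection of moment-map components, transported to basic cohomology: $S$ is (a component of) a moment-type map for the transverse action of the enveloping torus $E$, whose orbits are the leaf closures $\overline{\mathcal{F}(x)}$ by Lemma \ref{orbitsClosuresAreEnvelopingAction}, so the associated basic Morse stratification is \emph{equivariantly} perfect; combining this with the equivariant formality of the transverse $E$-action on the Riemannian foliation $\mathcal{F}$ (Theorem A together with the formality results of \cite{goertsches_equivariant_2018}) and with the orientability and even rank of the $\nu_N^-$ established above, the connecting homomorphisms in the relevant spectral sequence vanish and the inequality degenerates to the claimed equality. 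The main obstacles are thus twofold: ensuring the enveloping torus acts transversally enough for the machinery of \cite{goertsches_equivariant_2018} to be available --- which holds because $\mathcal{F}$ is Riemannian --- and ruling out cancellation among the basic cohomologies of distinct components $N$, which is exactly what the equivariant-perfection argument delivers.
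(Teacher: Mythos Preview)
Your proposal is correct and follows the same route as the paper: both rest on the equivariant basic cohomology machinery of \cite{goertsches_equivariant_2018}. The paper is far terser---after the preceding Lemma establishes that $S$ is an $\mathcal{F}$-basic Morse--Bott function, it simply states that applying the results of \cite{goertsches_equivariant_2018} to each component $N$ yields the identity, with no further argument---whereas you have unpacked what that citation actually provides: the even rank and orientability of $\nu_N^-$ via the isotropy representation of the subtorus $T_N$, and the passage from the basic Morse--Bott inequalities to an equality via equivariant perfection and formality. One small caution on the logical order: in the Goertsches--T\"oben framework the equivariant formality of the transverse action is typically \emph{obtained from} the existence of a basic Morse--Bott function with even indices and orientable negative normal bundles (via a Thom--Gysin/self-completing argument), rather than fed in as an independent hypothesis, so your final paragraph reads as mildly circular; but the ingredients you identify are exactly the right ones, and the sketch is sound.
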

 In particular, by evaluating both sides at $t = 1$, we obtain
 \begin{theorem}\label{Morse-DeRham_isomorphism} Let $(M, \vec{\lambda}, \mathcal{R} \oplus \xi)$ be an isometric contact foliation, and, for $Z$ a vector field chosen as above, consider the critical point set $C$ of the Morse-Bott function $S: p \mapsto \lambda_i(Z_p)$. Then
     \[
        \dim_\mathbb{R}H^\ast_B(\mathcal{F}) = \dim_\mathbb{R}H^\ast_B( \mathcal{F}\rvert_C).
     \]
     In particular, if $\mathcal{F}$ has only finitely many closed orbits, then the dimension of the basic cohomology ring $H^\ast_B(\mathcal{F})$ is exactly the number of closed orbits of $\mathcal{F}$.
 \end{theorem}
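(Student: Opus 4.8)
The plan is to read the statement off the preceding theorem by specialising the polynomial identity
\[
    P_{\mathcal{F}}(t) = \sum_N t^{i_N}P_{\mathcal{F}\rvert_N}(t)
\]
to $t = 1$. First I would note that $P_{\mathcal{F}}(1) = \sum_i b_i(\mathcal{F}) = \dim_\mathbb{R}H^\ast_B(\mathcal{F})$ and likewise $P_{\mathcal{F}\rvert_N}(1) = \dim_\mathbb{R}H^\ast_B(\mathcal{F}\rvert_N)$ for each connected component $N$ of $C$. Since $C$ is a submanifold of $M$ with only finitely many connected components — finiteness coming from compactness of $M$ via the Slice Theorem, exactly as in the discussion preceding Remark \ref{rem:notation} — a basic form for $\mathcal{F}\rvert_C$ is just a tuple of basic forms on the components, so that $\Omega^\ast_B(\mathcal{F}\rvert_C) = \bigoplus_N \Omega^\ast_B(\mathcal{F}\rvert_N)$ and hence $H^\ast_B(\mathcal{F}\rvert_C) = \bigoplus_N H^\ast_B(\mathcal{F}\rvert_N)$. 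Evaluating the identity at $t = 1$ kills every exponent $t^{i_N}$, leaving
\[
    \dim_\mathbb{R}H^\ast_B(\mathcal{F}) = \sum_N \dim_\mathbb{R}H^\ast_B(\mathcal{F}\rvert_N) = \dim_\mathbb{R}H^\ast_B(\mathcal{F}\rvert_C),
\]
which is the first assertion.

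For the ``in particular'' clause I would assume $\mathcal{F}$ has only finitely many closed leaves. As recalled in Section \ref{prelim}, a closed leaf of the codimension-$2n$ foliation $\mathcal{F}$ is a compact $q$-dimensional torus, and by the characterisation of $C$ established above, $C$ is precisely the union of these finitely many tori. Each such leaf is then both closed in $C$ (being compact) and open in $C$ (being the complement of the remaining finitely many closed leaves), hence, being connected, it is exactly one connected component of $C$. For such a component $N$ the restricted foliation $\mathcal{F}\rvert_N$ has $N$ itself as its unique leaf, so $T(\mathcal{F}\rvert_N) = TN$; a basic form must be horizontal and invariant for all of $TN$, which forces it to be a locally constant function. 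Therefore $H^\ast_B(\mathcal{F}\rvert_N) = H^0_B(\mathcal{F}\rvert_N) \cong \mathbb{R}$, so $\dim_\mathbb{R}H^\ast_B(\mathcal{F}\rvert_N) = 1$, and summing over the components together with the first assertion yields
\[
    \dim_\mathbb{R}H^\ast_B(\mathcal{F}) = \#\{\text{connected components of } C\} = \#\{\text{closed leaves of } \mathcal{F}\}.
\]

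I do not expect a genuine obstacle: the entire content is prepared by the preceding theorem and by the structural description of $C$. The only two points that need a careful word are (a) additivity of basic cohomology over the finitely many connected components of $C$, and (b) the identification, under the finiteness hypothesis, of each closed leaf with a single connected component of $C$ carrying the one-leaf foliation — so that it contributes exactly $1$ to the total dimension. Both follow from the elementary observation that finitely many pairwise disjoint compact leaves of equal dimension are mutually clopen inside $C$.
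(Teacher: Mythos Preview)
Your proposal is correct and follows exactly the paper's approach: the paper's proof consists of the single remark ``by evaluating both sides at $t=1$'' applied to the preceding Poincar\'e-polynomial identity, and you have simply spelled out that evaluation together with the straightforward ``in particular'' computation that each isolated closed leaf contributes $\dim_\mathbb{R}H^\ast_B(\mathcal{F}\rvert_N)=1$.
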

Theorem \ref{Morse-DeRham_isomorphism} allows us to estimate the number of closed orbits by studying the basic cohomology of the contact foliation $\mathcal{F}$. The first thing to note is that the even-dimensional basic cohomology groups are never zero because the exterior derivatives of the adapted coframe $\vec{\lambda}$ define non-zero basic forms. It is known that an invariant transversal volume form $\mu$ for a harmonic foliation $(M, \mathcal{F})$ represents a non-zero class on the top-dimensional basic cohomology space $H^{\mathrm{codim}\mathcal{F}}_B(\mathcal{F})$. 
This implies, in particular, that if $\mathcal{F}$ is a foliation of even codimension $2n$, and $\omega$ is an invariant transversal symplectic form, then $[\omega]^i \neq 0 \in H^{2i}_B(\mathcal{F})$ for $i = 1, \cdots, n$ (cf. \cite[Theorems 4.32 and 4.33]{TondeurFoliationsRiemannianManifolds2012}). 
In the case of $q$-contact structures, one notes that the operator 
\[
    \omega \mapsto \int_M\lambda\wedge\omega,
\]
where $\lambda := \lambda_1 \wedge \cdots \wedge \lambda_q$ is the characteristic form, descends to an operator $H^\ast_B(\mathcal{F}) \to \mathbb{R}$, because the $q$-form $\lambda$ is $\mathcal{F}$-closed (cf. \cite{finamore_contact_2022}). Such operator maps $[d\lambda_i]^n$ to a non-zero number, since $\lambda\wedge(d\lambda_i)^n$ is a volume form on $M$, hence $[d\lambda_i]^n \neq 0$, and consequently $[d\lambda_i]^j \neq 0$ for every $j = 1, \cdots, n$.

To better use this fact, we associate with each adapted coframe the following quantity.
\begin{definition}
Let $(M, \vec{\lambda}, \mathcal{R} \oplus \xi, g)$ be an isometric contact foliation on the $(2n+q)$-dimensional manifold $M$. Define $\delta_0(\vec{\lambda}) := 1$, and for $i = 1, \cdots, n$, let $\delta_i(\vec{\lambda}))$ be the dimension of the linear subspace of $H^{2i}_B(\mathcal{F})$ spanned by $\{[d\lambda_1]^i, \cdots, [d\lambda_q]^i\}$. 
In other words
\[
\delta_i(\vec{\lambda}) := \max\{\# L; L \subset \{[d\lambda_1]^i, \cdots, [d\lambda_q]^i\} \text{ is linearly independent}\}.
\]
The natural number 
\[
    \delta(\vec{\lambda)} := \sum_{i = 0}^n \delta_i(\vec{\lambda}) = 1 + \sum_{i = 1}^n \delta_i(\vec{\lambda})
\] 
is the \textbf{basic dimension} of the adapted coframe $\vec{\lambda}$.
\end{definition}

Note that $\delta_i(\vec{\lambda})$ is bounded below by $1$ and above by either $q$ or the dimension of $H^{2i}_B(\mathcal{F})$. Hence the basic dimension satisfies the inequalities
\begin{equation}\label{basis_dimension}
    n + 1 \leq \delta(\vec{\lambda}) \leq \min\{qn + 1, \dim H^{2\ast}_B(\mathcal{F})\}. 
\end{equation}
We remark that for uniform contact foliations, the basic dimension is always minimal, i.e., equal to exactly $n+1$. Using Theorem \ref{Morse-DeRham_isomorphism} we obtain
 \begin{theoremB}\label{ClosedOrbitsLowerBound}
     Let $\mathcal{F}$ be an $C^1$-equicontinuous contact foliation on a closed manifold $M$. Then the number of closed orbits of $\mathcal{F}$ is at least $\delta(\vec{\lambda}) + b_1(M ; \mathbb{R})$. In particular, an isometric contact foliation of codimension $2n$ has no less than $n+1$ closed orbits.
 \end{theoremB}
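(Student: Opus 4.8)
The plan is to combine the Morse-theoretic dictionary from Theorem \ref{Morse-DeRham_isomorphism} with the cohomological lower bounds collected above. By Theorem \ref{C1EquicontinuityCharacterisation}, a quasiconformal contact foliation on a closed manifold is $C^1$-equicontinuous and hence isometric, so we may fix an invariant metric $g$, an adapted coframe $\vec{\lambda}$, a generic Killing field $Z$ satisfying (\ref{eq:condZ}), and the associated $\mathcal{F}$-basic Morse-Bott function $S$. If $\mathcal{F}$ has infinitely many closed orbits there is nothing to prove, so assume the critical set $C$ has finitely many components; then Theorem \ref{Morse-DeRham_isomorphism} gives that the number of closed orbits equals $\dim_{\mathbb{R}} H^\ast_B(\mathcal{F})$, and it suffices to bound this dimension from below by $\delta(\vec{\lambda}) + b_1(M;\mathbb{R})$.

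The first ingredient is the even-degree estimate. As noted just before the statement, the operator $[\omega] \mapsto \int_M \lambda \wedge \omega$ descends to $H^\ast_B(\mathcal{F}) \to \mathbb{R}$ and sends each $[d\lambda_i]^n$ to a nonzero number, whence $[d\lambda_i]^j \neq 0$ in $H^{2j}_B(\mathcal{F})$ for all $j \le n$. By the very definition of $\delta_i(\vec{\lambda})$ as the rank of the span of $\{[d\lambda_1]^i, \dots, [d\lambda_q]^i\}$ inside $H^{2i}_B(\mathcal{F})$, the even-degree part of the basic cohomology already contributes at least $\sum_{i=0}^n \delta_i(\vec{\lambda}) = \delta(\vec{\lambda})$ to $\dim_{\mathbb{R}} H^\ast_B(\mathcal{F})$ (the $i=0$ term accounting for the class of the constant function).

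The second ingredient is an odd-degree contribution of size $b_1(M;\mathbb{R})$, coming from degree one. I would argue that the natural map $H^1_B(\mathcal{F}) \to H^1_{dR}(M)$ induced by the inclusion of basic forms into all forms is surjective: since $g$ is invariant under the torus $E = \mathrm{E}^1_F$, averaging a closed $1$-form over $E$ produces an $E$-invariant — in particular $\mathcal{F}$-basic — closed $1$-form in the same de Rham class (the averaging does not change the cohomology class because $E$ is connected). Hence $\dim H^1_B(\mathcal{F}) \ge b_1(M;\mathbb{R})$. It remains to check that these degree-one classes are linearly independent from the even-degree classes counted by $\delta(\vec{\lambda})$, which is automatic since they sit in different (odd vs. even) graded pieces. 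Combining, $\dim_{\mathbb{R}} H^\ast_B(\mathcal{F}) \ge \delta(\vec{\lambda}) + b_1(M;\mathbb{R})$, and the "in particular" clause follows from the left inequality in (\ref{basis_dimension}), namely $\delta(\vec{\lambda}) \ge n+1$, together with $b_1 \ge 0$.

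The main obstacle I anticipate is the surjectivity of $H^1_B(\mathcal{F}) \to H^1_{dR}(M)$ — i.e., making the averaging argument rigorous and checking that $E$-invariant forms are genuinely basic (they are $\mathcal{R}$-horizontal and $\mathcal{R}$-invariant because $\Gamma(\mathcal{R}) \subset \mathfrak{e}$, and $\iota_{R_i}$ of an $E$-invariant $1$-form is an $E$-invariant function, hence constant along leaves, but one still needs it to vanish — here one should instead average over $E$ a primitive-adjusted form, or invoke that the orbit closures are the leaf closures and basic cohomology of $\mathcal{F}$ coincides with equivariant considerations for the torus action, cf. \cite{goertsches_equivariant_2018}). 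An alternative, cleaner route is to cite directly that for a Riemannian foliation with all leaf closures of the same dimension the basic cohomology surjects onto $H^1$ of the manifold, or to run the Morse-Bott inequality bookkeeping of the preceding theorem degree by degree rather than only at $t=1$; either way the degree-one count is where the care is needed.
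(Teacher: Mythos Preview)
Your overall strategy matches the paper's: reduce to bounding $\dim_{\mathbb{R}} H^\ast_B(\mathcal{F})$ via Theorem \ref{Morse-DeRham_isomorphism}, then collect the even-degree contribution $\delta(\vec\lambda)$ and a degree-one contribution $b_1(M;\mathbb{R})$. The even-degree bookkeeping and the ``in particular'' clause via (\ref{basis_dimension}) are exactly as in the paper.

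The obstacle you flag in degree one is genuine, and the averaging argument cannot be repaired as stated: $E$-invariance of a closed $1$-form $\bar\alpha$ only forces each $\iota_{R_i}\bar\alpha$ to be constant, not zero. For a concrete failure, in the uniform case the forms $\lambda_i - \lambda_j$ are closed (since $d\lambda_i = d\lambda_j$) and $E$-invariant (each $\lambda_k$ is preserved by all of $E$), yet $\iota_{R_i}(\lambda_i - \lambda_j) = 1$, so they are not $\mathcal{F}$-basic; averaging such a form over $E$ returns it unchanged. Your proposed alternatives are also not what the paper does (and the ``all leaf closures of the same dimension'' hypothesis is not available here in general). The paper instead passes to the \emph{harmonic} representative: it invokes \cite[Theorem 3.10 and Proposition 3.11]{finamore_contact_2022}, which says that on an isometric $q$-contact manifold every harmonic $1$-form for the invariant metric is $\mathcal{F}$-basic, yielding an isomorphism $H^1_{dR}(M) \cong H^1_B(\mathcal{F})$ and hence $b_1(\mathcal{F}) = b_1(M;\mathbb{R})$. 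With that single input your argument is complete and coincides with the paper's.
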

 \begin{proof}
     By construction, we have $\delta_i(\vec{\lambda}) \leq b_{2i}(\mathcal{F})$. 
     Thus it is immediate that the basic dimension of the adapted coframe is a lower bound for $\dim_\mathbb{R}H^{2\ast}_B(\mathcal{F})$. Note that this bound does not take into account the dimensions of any of cohomology groups $H^i_B(\mathcal{F})$ for odd $i$. On the other hand, it was shown in \cite[Theorem 3.10]{finamore_contact_2022} that every harmonic $1$-form on an isometric $q$-contact manifold is also $\mathcal{F}$-basic, implying an isomorphism $H^1_{dR}(M) \approx H^1_B(\mathcal{F})$ (cf. \cite[Proposition 3.11]{finamore_contact_2022}). In particular, $b_1(M;\mathbb{R}) = b_1(\mathcal{F})$, and therefore $\delta(\vec{\lambda}) + b_1(M;\mathbb{R})$ is a lower bound for $\dim_\mathbb{R}H^\ast_B(\mathcal{F})$.
 \end{proof}

     The following is a direct application of Theorem B.
     
     \begin{theorem}
         On a closed manifold, every quasiconformal Reeb field satisfies the Weinstein conjecture. Moreover, if the dimension of the ambient manifold is $2n+1$, then the Reeb field has at least $n+1$ closed orbits.
     \end{theorem}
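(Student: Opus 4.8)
The plan is to read this statement as precisely the $q=1$ specialisation of Theorem B. First I would observe that a contact form $\alpha$ on a closed $(2n+1)$-dimensional manifold $M$ is the same data as a $1$-contact structure in the sense of Definition \ref{qcontactstructure}, with adapted coframe $\vec{\lambda} = (\alpha)$, Reeb distribution $\mathcal{R} = \mathrm{Span}\{R_\alpha\}$, contact distribution $\xi = \ker\alpha$, and contact foliation $\mathcal{F}$ equal to the (oriented) flow lines of the Reeb field $R_\alpha$. Under this dictionary, ``$R_\alpha$ is quasiconformal'' is by definition ``the contact action $F = \exp(tR_\alpha)$ is quasiconformal'', i.e.\ item (v) of Theorem~\ref{C1EquicontinuityCharacterisation}. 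Since $M$ is closed, Theorem~\ref{C1EquicontinuityCharacterisation} then gives all of (i)--(vi); in particular $\mathcal{F}$ is $C^1$-equicontinuous and admits an $F$-invariant metric $g$, so $(M,(\alpha),\mathcal{R}\oplus\xi,g)$ is an isometric contact foliation.

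Next I would simply feed this into Theorem~\ref{ClosedOrbitsLowerBound} (Theorem~B): the number of closed leaves of $\mathcal{F}$ is at least $\delta(\vec{\lambda}) + b_1(M;\mathbb{R})$. Because $q = 1$, the coframe consists of a single $1$-form, so the uniformity condition $d\lambda_i = d\lambda_j$ holds vacuously; hence, as recorded in inequality~(\ref{basis_dimension}) and the remark following it, the basic dimension is minimal, $\delta((\alpha)) = n+1$. Since $b_1(M;\mathbb{R}) \ge 0$, we conclude that $\mathcal{F}$ has at least $n+1$ closed leaves. Finally I would note that the closed leaves of $\mathcal{F}$ are exactly the closed orbits of the Reeb flow (they are the compact — hence circle — leaves), so $R_\alpha$ has at least $n+1$ closed orbits; in particular it has at least one, which is the Weinstein conjecture.

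I do not expect a genuine obstacle here: the content is entirely in Theorems~A and~B, and the proof is bookkeeping. The only points requiring care are (a) checking that ``quasiconformal Reeb field'' in the hypothesis really is the $q=1$ instance of the hypothesis ``$\mathcal{F}$ quasiconformal'' in Theorem~\ref{C1EquicontinuityCharacterisation}, which is immediate from the definition of quasiconformal contact action applied to the flow of $R_\alpha$; and (b) that closedness of $M$, needed in both Theorems~A and~B, is part of the statement. If one wished to be slightly more precise one could keep the sharper bound $\delta(\vec{\lambda}) + b_1(M;\mathbb{R})$ and remark that it strictly exceeds $n+1$ whenever $b_1(M;\mathbb{R}) > 0$; for a single contact form, however, $\delta$ itself can never improve on $n+1$.
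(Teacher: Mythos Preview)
Your proposal is correct and matches the paper's own argument: the paper simply states that this theorem is ``a direct application of Theorem~B'', and what you have written is exactly the unpacking of that sentence in the case $q=1$, invoking Theorem~A to pass from quasiconformal to isometric and then Theorem~B for the bound $n+1$.
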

     As a corollary, we present a new proof of a result of Banyaga's.

\begin{theorem}[Theorem 1 in \cite{banyaga_note_1990}] 
    Let $M$ be a closed manifold and $\lambda$ a regular contact form on $M$. If $\lambda'$ is $C^2$-close to $\lambda$, then $(M,\lambda')$ satisfies the Weinstein conjecture, and its Reeb field has at least $2$ closed orbits.    
\end{theorem}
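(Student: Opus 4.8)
The plan is to deduce the statement from Theorem B: it is enough to show that the Reeb field $R_{\lambda'}$ of $\lambda'$ is quasiconformal, since then its orbit foliation is an isometric contact foliation of codimension $2n=\dim M-1$ (and $n\ge1$, as a contact form forces $\dim M\ge3$), so Theorem B provides at least $n+1\ge2$ closed leaves, which are precisely the closed Reeb orbits of $\lambda'$.

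I would first record the unperturbed case. Since $\lambda$ is regular and $M$ is closed, Boothby's theorem \cite{boothby_contact_1958} makes every orbit of $R_\lambda$ closed; as in Example \ref{regularIsQsc}, the flow of $R_\lambda$ is then a free circle action, its $C^1$-enveloping group is $S^1$, and $R_\lambda$ is $C^1$-equicontinuous, so by Theorem \ref{C1EquicontinuityIsEquivalentToInvariantMetric} there is an $R_\lambda$-invariant metric $g_0$. Equivalently, $\pi\colon M\to B$ is a principal $S^1$-bundle over a closed symplectic base, $\lambda$ is a connection form, and $R_\lambda$ has a constant period $\tau_0>0$.

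For the perturbed form, note that $R_{\lambda'}$ is obtained algebraically from $\lambda'$ and $d\lambda'$ through $\iota_{R_{\lambda'}}\lambda'=1$ and $\iota_{R_{\lambda'}}d\lambda'=0$, and that non-degeneracy of $d\lambda'$ on $\ker\lambda'$ is an open condition; hence $R_{\lambda'}$ depends continuously, in the $C^1$-topology on vector fields, on the $2$-jet of $\lambda'$, so $R_{\lambda'}$ is $C^1$-close to $R_\lambda$ and, for each fixed $t$, the time-$t$ map $F'_t$ of its flow is $C^1$-close to that of $R_\lambda$. The goal is to upgrade this to $C^1$-equicontinuity of the \emph{entire} flow $F'$ of $R_{\lambda'}$, and here I would use the circle action $\phi$ generated by $R_\lambda$, which persists for $\lambda'$ near $\lambda$: the one-period average $\tilde\lambda:=\tau_0^{-1}\int_0^{\tau_0}\phi_s^{*}\lambda'\,ds$ is again a contact form, still $C^2$-close to $\lambda$, it is $\phi$-invariant, and its Reeb field commutes with $R_\lambda$; the presence of this commuting circle action should let one average $g_0$, in the spirit of the proof of Theorem \ref{C1EquicontinuityIsEquivalentToInvariantMetric}, into an $R_{\lambda'}$-invariant metric --- or at least into an $R_{\lambda'}$-invariant conformal structure, which by Theorem A is equally good. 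Together with the previous paragraph this would conclude the proof.

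The crux is precisely this last step, because $C^1$-closeness of $R_{\lambda'}$ to $R_\lambda$ is on its own far too weak: Gronwall's inequality applied to the variational equation of $F'_t$ in the metric $g_0$ yields only $\|dF'_t\|\le e^{c\varepsilon|t|}$ with $\varepsilon=\|R_{\lambda'}-R_\lambda\|_{C^1}$, which diverges, so the equicontinuity of the derivative cocycle $\{\,dF'_t|_{\xi'}:t\in\mathbb{R}\,\}$ along $\xi'=\ker\lambda'$ is genuinely what must be proved. One has to turn the strong recurrence inherited from the nearby periodic flow --- every orbit of $F'$ returns $O(\varepsilon)$-close to itself within time $\approx\tau_0$ --- into a time-uniform bound on $dF'_t|_{\xi'}$, thereby excluding exponential drift; this is where closeness to a \emph{regular} contact form (rather than to an arbitrary quasiconformal Reeb field) should be decisive. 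As a variant closer to Banyaga's own method, one may instead avoid quasiconformality altogether and run the Morse--Bott construction of Section \ref{morseTheory} directly on $M$: the $S^1$-invariant function $p\mapsto\int_{\gamma_p}\lambda'$, with $\gamma_p$ the $R_\lambda$-orbit through $p$, descends to a smooth function on $B$ whose critical points detect closed $R_{\lambda'}$-orbits, and every smooth function on a closed $2n$-dimensional symplectic manifold has at least $n+1$ critical points by Lusternik--Schnirelmann theory.
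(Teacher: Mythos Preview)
Your main line of attack---show that $R_{\lambda'}$ is quasiconformal and invoke Theorem~B---is exactly the paper's. The paper's argument for the key step consists of the single assertion ``quasiconformality is also an open property'', after which it concludes directly from the $C^1$-closeness of the two Reeb flows. You have located the real difficulty and been more candid about it than the paper: $C^1$-closeness of the generating fields controls the derivative cocycle only over bounded time, and your Gronwall remark shows exactly why no uniform-in-$t$ bound on $dF'_t|_{\xi'}$ drops out automatically. The paper offers no justification beyond the bare claim, and there is reason to doubt it in general---generic $C^2$-small perturbations of the standard contact form on $S^3$ are expected to produce Reeb dynamics with positive topological entropy, which by Theorem~A would preclude quasiconformality. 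Your averaging repair does not close the gap either: the $S^1$-averaged form $\tilde\lambda$ has a Reeb field commuting with $R_\lambda$, but that field is not $R_{\lambda'}$, and $R_{\lambda'}$ itself has no reason to commute with the original circle action $\phi$, so you cannot average a metric into one invariant under the flow you actually care about.

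Your fallback---the functional $p\mapsto\int_{\gamma_p}\lambda'$ on the Boothby--Wang base $B$, whose critical points detect closed $R_{\lambda'}$-orbits, combined with the Lusternik--Schnirelmann bound coming from $[\omega]^n\neq 0$---is essentially Banyaga's original argument. It is correct, yields the sharper bound of $n+1$ closed orbits, and bypasses quasiconformality entirely. In effect you have reproduced the proof the paper set out to re-derive, while exposing that the paper's shortcut through Theorem~B rests on an unsubstantiated openness claim.
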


\begin{proof}
    The contact condition for $1$-forms is simply that $\lambda\wedge d\lambda > 0$, which is clearly open in the $C^1$-topology. On the other hand, quasiconformality is also an open property. As seen in Example \ref{regularIsQsc}, the Reeb field associated to $\lambda$ is quasiconformal. If $\lambda$ and $\lambda'$ are sufficiently $C^2$-close, then their contact actions $F_\lambda$ and $F_{\lambda'}$ are $C^1$-close enough to guarantee that $F_{\lambda'}$ is also quasiconformal. Therefore, $(M, \lambda')$ satisfies the Weinstein conjecture, due to Theorem \ref{ClosedOrbitsLowerBound}. In particular, the Reeb field must have at least $2$ distinct closed orbits.
\end{proof}

  As remarked before, the basic dimension does not take into account any of the odd-dimensional basic cohomology groups. Hence, one should not expect it to be equal to the number of closed orbits. 

 \begin{example}\label{StiefelProduct}
     We consider the $7$-dimensional Stiefel manifold 
     \[
            V_{2,5} \approx ~^{\textstyle \mathrm{SO}(5)}\!\big/_{\textstyle \mathrm{SO}(3)}.
     \]
     As shown in \cite[Section 8]{goertsches_equivariant_2012}, $V_{2,5}$ supports a K-contact structure $(\alpha, g)$ whose Reeb field $S$ has exactly $4 = \delta(\alpha)$ orbits. We consider the $14$-dimensional manifold $M = V_{2,5} \times V_{2,5}$, and denote by $\pi_i: M \to V_{2,5}$ the projection on the $i$-th coordinate. For $i = 1,2$, we write
     \begin{align*}
         g_i &:= \pi_i^\ast g, \\
         \sigma_i &:= \pi_i^\ast\alpha, \\
         X_i &:= \pi_i^\ast S.         
     \end{align*}
     Then the $1$-forms
     \begin{align*}
        \lambda_+ &= \sigma_1 + \sigma_2 \\
        \lambda_- &= \sigma_1 - \sigma_2
     \end{align*}
     define an non-uniform $2$-contact structure on $M$ whose Reeb fields are $R_+ = 2^{-1}(X_1 + X_2)$ and $R_- = 2^{-1}(X_1 - X_2)$, respectively, as in Example \ref{product}. The metric $g = g_1 + g_2$ on $M$ is such that each Reeb field $R_i$ is Killing, so that $(M, \{\lambda_+, \lambda_-\}, g)$ defines an isometric $2$-contact structure.  The contact foliation $\mathcal{F}$ is the product of the contact flows in $(V_{2,5}, \alpha)$. In particular, the closed leaves are exactly the products of closed flow-lines. Hence $(M, \mathcal{F})$ has exactly $16$ closed leaves, and $\dim_\mathbb{R}H^\ast_B(\mathcal{F})$ is $16$, according to Theorem \ref{Morse-DeRham_isomorphism}. On the other hand, the Stiefel manifold $V_{2,5}$ is a real cohomology sphere of dimension $7$. Hence the Künneth formula implies that the first cohomology group of $M$ is $0$. Moreover, $\mathcal{F}$ has codimension $12 = 2\cdot 6$, hence the minimal number of closed leaves of $\mathcal{F}$ as given by Theorem \ref{ClosedOrbitsLowerBound} is 7. Using Equation \ref{basis_dimension}, we conclude that the basic dimension of the coframe $\{\lambda_+, \lambda_-\}$ is bounded above by $2\cdot 6 + 1 = 13$, so that is the maximum number of closed orbits one could assume $\mathcal{F}$ has by using the estimates of Theorem \ref{ClosedOrbitsLowerBound} alone.
 \end{example}
 Observe that the basic dimension $\delta(\{\lambda_+, \lambda_-\})$ is not minimal. In general, for an adapted coframe $\{\lambda_1, \cdots, \lambda_q\}$, two basic classes $[d\lambda_i]$ and $[d\lambda_j]$ satisfy an equality 
 \[
    [d\lambda_i]^l = a[d\lambda_j]^l
 \]
 for a non-zero real number $a$ if and only if there is a basic $(2l-1)$-form $\eta$ such that 
 \[
    (d\lambda_i)^l-a(d\lambda_j)^l = d\eta,
 \]
 and therefore
 \begin{equation}\label{BasicClassesLinearlyDependent}
    \lambda_i\wedge(d\lambda_i)^{l-1} - a\lambda_j\wedge(d\lambda_j)^{l-1} = \eta + \theta,
 \end{equation}
 where $\theta$ is a closed $(2l-1)$-form. In particular, in the case of the manifold $M$ of Example \ref{product}, if we assume $\delta_1(\{\lambda_+, \lambda_-\}) = 1$ and apply Equation \ref{BasicClassesLinearlyDependent} we get
 \begin{equation}\label{ProductIsNonUniform}
    \lambda_+ - a\lambda_- = df + \theta,
 \end{equation}
 where $f$ is a basic function and $d\theta = 0$, because $H_B^1(\mathcal{F}) \approx H^1_{dR}(M) = 0$ (cf. \cite[Proposition 3.11]{finamore_contact_2022}). Taking exterior derivatives in Equation \ref{ProductIsNonUniform} we obtain
 \[
    (a-1)d\sigma_1 = -(a+1)d\sigma_2,
 \]
 which can not happen for any real $a$. Thus $\delta_1(\{\lambda_+, \lambda_-\}) = 2$ and consequently $\delta(\{\lambda_+, \lambda_-\}) = 13$ is strictly bigger than $7 = 2^{-1}\mathrm{codim}(\mathcal{F})+1$.

\section{Cohomology of isometric contact foliations}\label{cohom}

As shown in Example \ref{StiefelProduct}, the lower bound of $2^{-1}\mathrm{codim}(\mathcal{F}) - 1$ need not be the exact number of closed orbits, even when there are only finitely many of them. However, in the case when the number of closed orbits is finite and \emph{minimal}, i.e., equal to exactly $2^{-1}\mathrm{codim}(\mathcal{F}) - 1$, then substantial topological restrictions are imposed on the ambient manifold. This is true for metric $f$-K-contact structures, and since we showed in Section \ref{morseTheory} that the function $S: M \to \mathbb{R}$ is Morse-Bott even without the presence of the tensor $f$, those topological restrictions carry over to the more general class of isometric uniform $q$-contact manifolds. All the proofs in \cite{goertsches_topology_2020} apply, \emph{mutatis mutandis}, to the case of $q$-contact manifolds, giving rise to the results in this section.

\begin{lemma}\cite[Proposition 4.4]{goertsches_topology_2020}\label{exactSequencesBasicCohomologies}
    Let $\mathcal{F}$ be a $q$-dimensional contact foliation, and $\mathcal{F}_s$ be the integral foliation of the bundle spanned by the Reeb fields $R_1, \cdots, R_i$. If $\mathcal{F}$ is $C^1$-equicontinuous, then for $s = 0, \cdots, q-2$ there are short exact sequences
    \[
        0 \longrightarrow H^\ast_B(\mathcal{F}_{s+1}) \longrightarrow H^\ast_B(\mathcal{F}_s) \longrightarrow H^{\ast-1}_B(\mathcal{F}_{s+1}) \longrightarrow 0,
    \]
    as well as a long exact sequence    
     \[
     \scalebox{.86}{$
        \cdots \xrightarrow{} H^\ast_B(\mathcal{F}) \xrightarrow{} H^\ast_B(\mathcal{F}_{q-1}) \xrightarrow{} H^{\ast-1}_B(\mathcal{F}) \xrightarrow{\delta} H^{\ast+1}_B(\mathcal{F})\xrightarrow{} \cdots
    $}
    \]    
    where $\delta([\sigma]) = [d\lambda_{q-1}\wedge\sigma]$.
\end{lemma}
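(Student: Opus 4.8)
The plan is to produce, for each step of the filtration $\mathcal{F}_0\subset\mathcal{F}_1\subset\cdots\subset\mathcal{F}_q=\mathcal{F}$, a Gysin-type long exact sequence relating $H^{\ast}_B(\mathcal{F}_s)$ and $H^{\ast}_B(\mathcal{F}_{s+1})$, and then to show that for $s\le q-2$ the connecting homomorphism vanishes, so that the long sequence collapses to the asserted short exact sequences. First I would fix the algebraic setting. By Theorem A, $C^1$-equicontinuity makes $E:=\mathrm{E}^1_F$ a torus, and for $0\le s\le q$ the foliation $\mathcal{F}_s$ is the orbit foliation of the subtorus $E_s:=\overline{\exp\,\mathrm{Span}\{R_1,\dots,R_s\}}\subseteq E$ (Lemma \ref{orbitsClosuresAreEnvelopingAction} applied to the $R_1,\dots,R_s$-action). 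Since $\{g\in E:g^{\ast}\omega=\omega\}$ is a closed subgroup of $E$, the condition $\mathcal{L}_{R_j}\omega=0$ for all $j\le s$ forces $\omega$ to be $E_s$-invariant; hence $\omega$ is $\mathcal{F}_s$-basic if and only if $\omega\in\Omega(M)^{E_s}$ and $\iota_{R_j}\omega=0$ for $j\le s$. The "transverse connection form" for the passage from $\mathcal{F}_s$ to $\mathcal{F}_{s+1}$ is $\lambda_{s+1}$: it is $\mathcal{F}_s$-basic, it is $E$-invariant because $\mathcal{L}_{R_j}\lambda_{s+1}=\iota_{R_j}d\lambda_{s+1}=0$ (as $R_j\in\ker d\lambda_{s+1}=\mathcal{R}$), and $\lambda_{s+1}(R_{s+1})=1$ while $\lambda_{s+1}(R_j)=0$ for $j\le s$.

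Next I would build the Gysin sequence. Let $A_s^{\bullet}\subseteq\Omega(M)^{E_{s+1}}$ be the subcomplex of forms annihilated by $\iota_{R_j}$ for $j\le s$ --- the $\mathcal{F}_s$-basic forms that are in addition $R_{s+1}$-invariant. The torus $E_{s+1}/E_s$ acts on $\Omega^{\bullet}_B(\mathcal{F}_s)$ (trivially through $E_s$), and averaging over it gives a chain projection $\Omega^{\bullet}_B(\mathcal{F}_s)\to A_s^{\bullet}$ homotopic to the identity --- the homotopy operator $\omega\mapsto\int_0^1\phi_t^{\ast}\iota_Y\omega\,dt$ with $Y\in\mathrm{Lie}(E_{s+1})$ preserves $A_s^{\bullet}$ since $[R_j,Y]=0$ --- so $A_s^{\bullet}\hookrightarrow\Omega^{\bullet}_B(\mathcal{F}_s)$ is a quasi-isomorphism. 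On $A_s^{\bullet}$ one has $\mathcal{L}_{R_{s+1}}=0$, so $\iota_{R_{s+1}}$ anticommutes with $d$, and
\[
0\longrightarrow\Omega^{\bullet}_B(\mathcal{F}_{s+1})\hookrightarrow A_s^{\bullet}\xrightarrow{\ \iota_{R_{s+1}}\ }\Omega^{\bullet-1}_B(\mathcal{F}_{s+1})\longrightarrow 0
\]
is exact: $\ker(\iota_{R_{s+1}}|_{A_s^{\bullet}})=\Omega^{\bullet}_B(\mathcal{F}_{s+1})$, and $\eta\mapsto\lambda_{s+1}\wedge\eta$ is a section witnessing surjectivity (one checks $\lambda_{s+1}\wedge\eta\in A_s^{\bullet}$ and $\iota_{R_{s+1}}(\lambda_{s+1}\wedge\eta)=\eta$). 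Passing to cohomology and using $H^{\bullet}(A_s^{\bullet})=H^{\bullet}_B(\mathcal{F}_s)$ gives the long exact sequence
\[
\cdots\to H^{p}_B(\mathcal{F}_{s+1})\to H^{p}_B(\mathcal{F}_s)\xrightarrow{\ \iota_{R_{s+1}}\ }H^{p-1}_B(\mathcal{F}_{s+1})\xrightarrow{\ \delta\ }H^{p+1}_B(\mathcal{F}_{s+1})\to\cdots,
\]
and computing $\delta$ by lifting $[\eta]$ to $\lambda_{s+1}\wedge\eta$ and applying $d$ (note $d(\lambda_{s+1}\wedge\eta)=d\lambda_{s+1}\wedge\eta$ and $\iota_{R_{s+1}}(d\lambda_{s+1}\wedge\eta)=0$, since $R_{s+1}\in\ker d\lambda_{s+1}$) yields $\delta([\eta])=[d\lambda_{s+1}\wedge\eta]=[d\lambda_{s+1}]\cup[\eta]$. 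Specialising to $s=q-1$, where $E_q=E$, $\mathcal{F}_q=\mathcal{F}$, and $d\lambda_q=d\lambda_{q-1}$ by uniformity, recovers exactly the stated long exact sequence with $\delta([\sigma])=[d\lambda_{q-1}\wedge\sigma]$.

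It then remains to see that the connecting maps vanish for $s\le q-2$. The long exact sequence splits into the short exact sequences $0\to H^{\ast}_B(\mathcal{F}_{s+1})\to H^{\ast}_B(\mathcal{F}_s)\to H^{\ast-1}_B(\mathcal{F}_{s+1})\to 0$ exactly when every $\delta$ is zero; testing on $1\in H^0_B(\mathcal{F}_{s+1})$ shows this is equivalent to $[d\lambda_{s+1}]=0$ in $H^2_B(\mathcal{F}_{s+1})$. For $s\le q-2$ I would use that $R_{s+2}$ is present: $\lambda_{s+2}$ is $\mathcal{F}_{s+1}$-basic (it kills $\iota_{R_j}$ for $j\le s+1$, and $\mathcal{L}_{R_j}\lambda_{s+2}=\iota_{R_j}d\lambda_{s+2}=0$), and $d\lambda_{s+2}=d\lambda_{s+1}$ by uniformity, so $\lambda_{s+2}$ is an $\mathcal{F}_{s+1}$-basic primitive of $d\lambda_{s+1}$; hence $[d\lambda_{s+1}]=0$ and the step splits. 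For the top step $s=q-1$ no such primitive exists --- in fact $[d\lambda_q]^{n}\neq0$ in $H^{2n}_B(\mathcal{F})$, as recorded in Section \ref{morseTheory} --- which is why only the long exact sequence is available there.

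I expect the main obstacle to be the two non-formal points: the quasi-isomorphism $A_s^{\bullet}\simeq\Omega^{\bullet}_B(\mathcal{F}_s)$, which is precisely where $C^1$-equicontinuity enters (one needs a compact torus $E_{s+1}/E_s$ to average over; without a bundle-like metric the two complexes need not agree), and the vanishing $[d\lambda_{s+1}]=0$ for $s\le q-2$, which genuinely uses uniformity --- for a non-uniform isometric $q$-contact structure, such as a product of two standard contact $3$-spheres, a period computation along a closed Reeb orbit shows that $[d\lambda_{s+1}]$ need not vanish, so the lower sequences need not split. Everything else is bookkeeping with $\iota$, $\mathcal{L}$ and the snake lemma. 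This is the $q$-contact counterpart of \cite[Proposition 4.4]{goertsches_topology_2020}; the one extra ingredient over that setting is the observation that $\lambda_{s+1}$ is $E$-invariant with $\iota_{R_j}d\lambda_{s+1}=0$ for every $j$.
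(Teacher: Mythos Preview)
Your argument is correct and is precisely the Gysin-sequence approach of \cite[Proposition~4.4]{goertsches_topology_2020}, which the paper cites without reproducing a proof: one passes to the $E_{s+1}$-invariant subcomplex $A_s^\bullet$ by averaging over the torus, sets up the short exact sequence $0\to\Omega_B^\bullet(\mathcal{F}_{s+1})\to A_s^\bullet\xrightarrow{\iota_{R_{s+1}}}\Omega_B^{\bullet-1}(\mathcal{F}_{s+1})\to 0$ with section $\lambda_{s+1}\wedge-$, reads off $\delta=[d\lambda_{s+1}]\cup-$, and kills it for $s\le q-2$ via the basic primitive $\lambda_{s+2}$.

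Two small remarks. Your sentence ``$\mathcal{F}_s$ is the orbit foliation of the subtorus $E_s$'' is not literally true --- the $E_s$-orbits are the \emph{closures} of the $\mathcal{F}_s$-leaves --- but what you actually use, namely that $\mathcal{L}_{R_j}\omega=0$ for all $j\le s$ is equivalent to $E_s$-invariance, is correct, so the argument is unaffected. You are also right to flag that uniformity is genuinely being invoked, both for the primitive $\lambda_{s+2}$ and for writing the top connecting map as $[d\lambda_{q-1}\wedge\sigma]$ rather than $[d\lambda_q\wedge\sigma]$; the lemma does not list this hypothesis, but Section~\ref{cohom} is explicitly set in the uniform context, and the cited source treats metric $f$-$K$-contact manifolds, which are uniform by definition.
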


Now, if the isometric $q$-contact structure is also uniform, that is, if $d\lambda_i = \omega$ for every $i$, then the first de Rham cohomology group of $M$ has dimension at least $q-1$ since each form $\lambda_i-\lambda_q$ is closed and non-exact \cite[Proposition 2.36]{finamore_contact_2022}. 
In particular, we can obtain a homomorphism $\wedge(\mathbb{R}^{q-1}) \to H^1_{dR}(M)$, where $\wedge(\mathbb{R}^{q-1})$ is the exterior algebra on $q-i$ generators, by sending the $i$-th generator to the class $[\lambda_i - \lambda_q]$. This gives $H^\ast_{dR}(M)$ the structure of an $\wedge(\mathbb{R}^{q-1})$-algebra, and then the exact sequences of Lemma \ref{exactSequencesBasicCohomologies} allow us to conclude the existence of the following isomorphism (cf. \cite[Theorem 1.1]{goertsches_topology_2020})
\begin{equation}
    H^\ast_{dR}(M) \approx \wedge(\mathbb{R}^{q-1}) \otimes H^\ast_B(\mathcal{F})
\end{equation}
Using the isomorphism above, we obtain
\begin{theorem}\cite[Theorem 6.4]{goertsches_topology_2020}
    For a uniform $C^1$-equicontinuous $q$-contact foliation $\mathcal{F}$ on a $(2n+q)$-dimensional closed manifold $M$, the following are equivalent.
    \begin{itemize}
        \item[(i)] $\mathcal{F}$ has exactly $n+1$ closed orbits;
        \item[(ii)] The basic cohomology of $\mathcal{F}$ is the same as the de Rham cohomology of $\mathbb{C}P^n$;
        \item[(iii)] The basic cohomology of $\mathcal{F}_{q-1}$ is the same as the de Rham cohomology of the sphere $S^{2n+1}$;
        \item[(iv)] The de Rham cohomology of $M$ is the same as that of $S^{2n+1} \times \mathbb{T}^{q-1}$.
    \end{itemize}
\end{theorem}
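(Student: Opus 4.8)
The plan is to prove the three mutual equivalences (i)$\Leftrightarrow$(ii), (ii)$\Leftrightarrow$(iii) and (iii)$\Leftrightarrow$(iv), using respectively the Morse--de Rham isomorphism of Theorem \ref{Morse-DeRham_isomorphism}, the Gysin-type long exact sequence of Lemma \ref{exactSequencesBasicCohomologies}, and the iterated short exact sequences of that same lemma. Throughout I write $\omega := d\lambda_1 = \cdots = d\lambda_q$ for the common curvature form of the uniform structure, and use the a priori facts recorded before the statement: $[\omega]^j \neq 0$ in $H^{2j}_B(\mathcal F)$ for $0 \leq j \leq n$, while $H^k_B(\mathcal F) = 0$ for $k > \mathrm{codim}\,\mathcal F = 2n$ and $H^k_B(\mathcal F_{q-1}) = 0$ for $k > 2n+1$. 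For (i)$\Leftrightarrow$(ii): if (i) holds, then $\mathcal F$ has finitely many closed orbits and Theorem \ref{Morse-DeRham_isomorphism} gives $\dim_{\mathbb R} H^\ast_B(\mathcal F) = n+1$; the $n+1$ linearly independent classes $1, [\omega], \dots, [\omega]^n$, living in the even degrees $0, 2, \dots, 2n$, must then form a basis, and since $[\omega]^{n+1}$ has degree $2n+2 > 2n$ it vanishes, so $H^\ast_B(\mathcal F) \cong \mathbb R[\omega]/(\omega^{n+1}) \cong H^\ast_{dR}(\mathbb{C}P^n)$ as graded rings, which is (ii). Conversely (ii) forces $\dim_{\mathbb R} H^\ast_B(\mathcal F) = n+1 < \infty$, and — granting, as is implicit here and in \cite{goertsches_topology_2020}, that the set of closed orbits is finite — Theorem \ref{Morse-DeRham_isomorphism} returns exactly $n+1$ of them.

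For (ii)$\Leftrightarrow$(iii) the engine is the long exact sequence of Lemma \ref{exactSequencesBasicCohomologies},
\[
\cdots \longrightarrow H^k_B(\mathcal F) \longrightarrow H^k_B(\mathcal F_{q-1}) \longrightarrow H^{k-1}_B(\mathcal F) \xrightarrow{\;\cup[\omega]\;} H^{k+1}_B(\mathcal F) \longrightarrow H^{k+1}_B(\mathcal F_{q-1}) \longrightarrow \cdots,
\]
whose connecting map is cup product with $[d\lambda_{q-1}] = [\omega]$. If (ii) holds, one notes that $\cup[\omega]\colon H^k_B(\mathcal F) \to H^{k+2}_B(\mathcal F)$ is an isomorphism for $0 \leq k \leq 2n-2$ and has vanishing source or target otherwise, so the sequence collapses and yields $H^k_B(\mathcal F_{q-1}) = \mathbb R$ for $k \in \{0, 2n+1\}$ and $0$ otherwise, i.e. (iii). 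For the converse I would feed $H^\ast_B(\mathcal F_{q-1}) \cong H^\ast_{dR}(S^{2n+1})$ into the same sequence and combine it with $H^0_B(\mathcal F) = \mathbb R$, the non-vanishing of the $[\omega]^j$ in degrees $\leq 2n$, and the vanishing of $H^{>2n}_B(\mathcal F)$, reading off that $\cup[\omega]\colon H^k_B(\mathcal F) \xrightarrow{\sim} H^{k+2}_B(\mathcal F)$ for $0 \leq k \leq 2n-2$ while $H^{2n-1}_B(\mathcal F)$ injects into $H^{2n+1}_B(\mathcal F) = 0$; chaining these identifications downwards gives $H^{\mathrm{odd}}_B(\mathcal F) = 0$ and $\dim H^{2j}_B(\mathcal F) = 1$ for $0 \leq j \leq n$, i.e. (ii).

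For (iii)$\Leftrightarrow$(iv): iterating the short exact sequences $0 \to H^\ast_B(\mathcal F_{s+1}) \to H^\ast_B(\mathcal F_s) \to H^{\ast-1}_B(\mathcal F_{s+1}) \to 0$ over $s = 0, \dots, q-2$ — each of which splits, being a sequence of $\mathbb R$-vector spaces — expresses $H^\ast_{dR}(M) = H^\ast_B(\mathcal F_0)$ as $\wedge(\mathbb R^{q-1}) \otimes H^\ast_B(\mathcal F_{q-1})$, the exterior factor being generated as a ring by the closed, non-exact classes $[\lambda_i - \lambda_q]$. By the Künneth formula $\wedge(\mathbb R^{q-1}) \otimes H^\ast_{dR}(S^{2n+1}) \cong H^\ast_{dR}(S^{2n+1} \times \mathbb T^{q-1})$, so (iii) implies (iv) at once. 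Conversely, comparing Poincaré polynomials gives $P_M(t) = (1+t)^{q-1} P_{\mathcal F_{q-1}}(t)$, and since $(1+t)^{q-1}$ is not a zero divisor in $\mathbb R[t]$, hypothesis (iv) forces $P_{\mathcal F_{q-1}}(t) = 1 + t^{2n+1}$; the ring structure of $H^\ast_B(\mathcal F_{q-1})$ is then determined purely by degree, giving (iii).

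The step I expect to be the main obstacle is the converse half of (ii)$\Leftrightarrow$(iii): one has to run the long exact sequence degree by degree, play it against the a priori constraints on $H^\ast_B(\mathcal F)$ to pin down every basic Betti number, and dispatch the low-dimensional cases ($n = 1$ in particular) separately. Everything else is essentially bookkeeping once Lemma \ref{exactSequencesBasicCohomologies} and Theorem \ref{Morse-DeRham_isomorphism} are in hand; the only further care needed is to track ring — not merely graded-vector-space — structures, which is automatic from degree considerations together with the explicit generators $[\omega]$ and $[\lambda_i - \lambda_q]$.
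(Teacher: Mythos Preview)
Your proposal is correct and matches the paper's approach. The paper does not actually prove this theorem in detail: it records the isomorphism $H^\ast_{dR}(M) \cong \wedge(\mathbb{R}^{q-1}) \otimes H^\ast_B(\mathcal{F})$ coming from Lemma \ref{exactSequencesBasicCohomologies} and then asserts that the arguments of \cite{goertsches_topology_2020} go through \emph{mutatis mutandis}; your outline supplies precisely those details with the same ingredients (Theorem \ref{Morse-DeRham_isomorphism} for (i)$\Leftrightarrow$(ii), and the long and short exact sequences of Lemma \ref{exactSequencesBasicCohomologies} for (ii)$\Leftrightarrow$(iii) and (iii)$\Leftrightarrow$(iv)), the only cosmetic difference being that you route through $H^\ast_B(\mathcal{F}_{q-1})$ and the Gysin sequence separately rather than absorbing that step into a single displayed isomorphism.
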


Concerning item (iv), we remark that a general uniform $q$-contact foliation is known to be a fibration over the torus $\mathbb{T}^{q-1}$ \cite[Theorem 2.38]{finamore_contact_2022}, while a similar result of Goertsches and Loiudice \cite{goertsches_how_2020} states that every metric $f$-K-contact manifold can be constructed from a K-contact manifold by taking mapping tori and applying certain deformations.

\section{Conclusions and further research.}

In conclusion, we have established the equivalence between different notions of regularity and the existence of invariant transverse geometric structures for contact foliations.
Specifically, we now know that $C^1$-equicontinuity, the existence of an invariant metric, the existence of a bundle-like metric, quasiconformality, and the existence of an invariant conformal structure are all equivalent properties of a contact foliation on a compact manifold.
\par Furthermore, we have extended existing arguments and constructions to a broader class of contact-like structures, which allowed us to significantly improve the lower bound on the number of closed orbits for isometric contact foliations. Specifically, we have increased this lower bound from $2$ to $n+1$ in ambient codimension $2n$.
\par In future studies, we would like to further explore the relationship between quasiconformality and tightness in contact structures.
Based on our results exposed in Theorem \ref{thm: otd&qsc}, we conjecture that every quasiconformal Reeb field must be necessarily associated to a tight contact structure. Additionally, we plan to examine the applicability of our findings to more general contact-like structures, such as the $k$-contact structures defined in \cite{gaset_contact_2020}.
\par Lastly, we are currently focusing on applications of the theory of $q$-contact structures. Specifically, we believe that quasiconformal contact actions can be utilised to classify for quasiconformal Anosov actions, extending classification theorems for Anosov flows found in Fang's seminal work \cite{fang_smooth_2004}.


\providecommand{\bysame}{\leavevmode\hbox to3em{\hrulefill}\thinspace}
\providecommand{\MR}{\relax\ifhmode\unskip\space\fi MR }
\providecommand{\MRhref}[2]{%
  \href{http://www.ams.org/mathscinet-getitem?mr=#1}{#2}
}
\providecommand{\href}[2]{#2}

\end{document}